\newtheorem{theorem}{Theorem}[section]
\newtheorem{lemma}[theorem]{Lemma}
\newtheorem{definition}[theorem]{Definition}
\newtheorem{corollary}[theorem]{Corollary}
\newtheorem{proposition}[theorem]{Proposition}
\newtheorem{problem}{Problem}
\newtheorem{remark}{Remark}
\newtheorem{assumption}{Assumption}
\newtheorem{example}{Example}
\newenvironment{proof}{\vspace{1ex}\noindent{\itshape 
	Proof:}\hspace{0.5em}} {\hfill\QEDBL\vspace{1ex}}
\newcommand{\mc}{\mathcal}
\newcommand{\mb}{\mathbb}
\newcommand{\rank}{\operatorname{rank}}
\newcommand{\real}{\mathbb{R}} 
\newcommand{\integ}{\mathbb{Z}}
\newcommand{\realpos}{\mathbb{R}_{\geq 0}}
\newcommand{\integpos}{\mathbb{Z}_{\geq 0}}
\newcommand{\tsp}{\mathsf{T}} 
\newcommand{\pinv}{\dagger} 
\newcommand{\inv}{{\negat 1}} 
\newcommand{\negat}{\scalebox{0.75}[.9]{\( - \)}}
\newcommand*{\QEDB}{\hfill\ensuremath{\square}}
\newcommand*{\QEDBL}{\hfill\ensuremath{\blacksquare}}
\newcommand{\map}[3]{#1: #2 \rightarrow #3}
\newcommand{\setdef}[2]{\{#1 \; : \; #2\}}
\newcommand{\sbs}[2]{{#1}_{\textup{#2}}}
\newcommand{\sps}[2]{{#1}^{\textup{#2}}}
\newcommand{\until}[1]{\{1,\dots,#1\}}
\newcommand{\norm}[1]{\Vert #1 \Vert}
\newcommand{\smallsim}{\text{\larger[-2]$\sim$}}
\DeclareMathAlphabet{\mymathbb}{U}{BOONDOX-ds}{m}{n}
\newcommand{\one}{\mathds{1}}
\newcommand{\zero}{\mymathbb{0}}
\newcommand{\E}[2]{\mathbb{E}_{#1}\left[ #2 \right]}
\title{\LARGE \textbf{%
Online Stochastic Optimization for Unknown Linear Systems: Data-Driven  Synthesis and Controller Analysis
}\thanks{A preliminary version of this paper will appear at the 2021 IEEE Conference on Decision and Control as~\cite{GB-MV-JC-ED:21-cdc}. This work was supported by the National Science Foundation through Awards CMMI 2044946 and 2044900, and by  the  National  Renewable  Energy  Laboratory  through  the subcontract UGA-0-41026-148.}}
\author{Gianluca Bianchin, Miguel Vaquero, Jorge Cort\'{e}s, and  Emiliano Dall'Anese\thanks{
 G. Bianchin and E. Dall'Anese are with the Department of Electrical, Computer, and Energy Engineering, University of Colorado Boulder. M. Vaquero is with the School of Human Sciences and Technology, IE University. J. Cort\'es is with the Department of Mechanical and Aerospace Engineering, University of California San Diego.
}}
\begin{document}
\maketitle

\begin{abstract}
This paper proposes a data-driven control framework to regulate an 
unknown, stochastic linear dynamical system to the solution of a 
(stochastic) convex optimization problem.
Despite the centrality of this problem, most of the available methods 
critically rely on a precise knowledge of the system dynamics (thus requiring 
off-line system identification and model refinement).  To this aim, in 
this paper we first show that the steady-state transfer function of a 
linear system can be computed directly from control experiments, 
bypassing explicit model identification. Then, we leverage the 
estimated transfer function to design a controller -- which is 
inspired by stochastic gradient descent methods --  that regulates 
the system to the solution of the prescribed optimization problem.
A distinguishing feature of our methods is that they do not require 
any  knowledge of the system dynamics, disturbance terms, or their 
distributions. Our technical analysis combines concepts and tools from
behavioral system theory, stochastic  optimization with 
decision-dependent distributions, and stability analysis. We 
illustrate the applicability of the framework on a case study for 
mobility-on-demand ride service  scheduling in Manhattan, NY.
\end{abstract}

\section{Introduction}
This paper focuses on the design of output feedback controllers to 
regulate the inputs and outputs of a discrete-time linear 
time-invariant system to the solution of a convex optimization 
problem. Our controller synthesis is inspired by principled 
optimization methods, properly modified to account for output feedback 
from the dynamical system (similarly to~\cite{AJ-ML-PB:09,FB-HD-CE:12,MC-ED-AB:20,LL-ZN-EM-JS:18,AH-SB-GH-FD:20,GB-JC-JP-ED:21-tcns,MN-MM:19,GB-DL-MD-SB-JL-FD:21}). 
These problems are relevant in application domains such as power 
grids~\cite{SM-AH-SB-GH-FD:18,MC-ED-AB:20}, transportation
systems~\cite{GB-JC-JP-ED:21-tcns}, robotics~\cite{MN-MM:19}, and control of epidemics~\cite{bianchin2021can}, where the target optimization problem encodes desired performance objectives and constraints (possibly dynamic and time-varying) of the system at equilibrium. Within this broad context, we propose a new approach for the synthesis of \emph{data-driven} feedback controllers for \emph{unknown stochastic LTI dynamical systems}, and we consider the case where the system is driven towards 
optimal solutions of a \emph{stochastic optimization problem}.    

Most of the recent literature on online optimization for dynamical 
systems critically relies on the assumption that the system dynamics 
are known~\cite{AJ-ML-PB:09,FB-HD-CE:12,MC-ED-AB:20,AH-SB-GH-FD:20,GB-JC-JP-ED:21-tcns,GB-JIP-ED:20-automatica,GB-DL-MD-SB-JL-FD:21}.
Unfortunately, perfect system knowledge is rarely available in 
practice -- especially when exogenous disturbances are not observable 
and/or inputs are not persistently exciting -- because maintaining and 
refining full system model often requires ad-hoc system-identification 
phases. In lieu of system-based controller synthesis, data-driven 
controllers can be fully synthesized by leveraging data from past 
trajectories. To the best of our knowledge, the design of 
optimization-based controllers that bypass model identification is 
still lacking in the literature.

\noindent \textbf{Related Work.}
Data-driven control methods exploit the ability to express the 
trajectories of a linear system in terms of a 
sufficiently-rich single trajectory, as shown by the fundamental 
lemma~\cite{JCW-PR-IM-BDM:05}. This result, developed in the context 
of the behavioral framework, has enabled the synthesis of several 
types of controllers, including static feedback 
controllers~\cite{TMM-PR:17,CDP-PT:19,ST-SA-NR-MM:20}, model 
predictive controllers \cite{JC-JL-FD:19,JB-JK-MAM-FA:20}, 
minimum-energy control laws \cite{GB-VK-FP:19}, to solve
trajectory tracking problems \cite{LX-MT-BG-GF:21}, distributed 
control problems \cite{AA-JC:20}, and recent extensions account for 
systems with nonlinear dynamics~\cite{JB-FA:20,MG-CDP-PT:20}. 

The line of research on online convex optimization \cite{EH:16}
is also related to this work. 
Several works applied online convex optimization to control plants 
modeled as algebraic maps~\cite{bolognani2013distributed,AB-ED-AS:19,CC-MC-JC-ED:19} (corresponding to cases where the dynamics 
are infinitely fast). 
When the dynamics are non-negligible,  LTI systems are 
considered in~\cite{MC-ED-AB:20,SM-AH-SB-GH-FD:18,GB-JC-JP-ED:21-tcns,%
LL-ZN-EM-JS:18}, stable nonlinear systems 
in~\cite{AH-SB-GH-FD:20,DL-DF:SM:21}, switching systems 
in~\cite{GB-JIP-ED:20-automatica}, and distributed multi-agent systems 
in~\cite{KH-JH-KU:14,FB-HD-CE:12}. 
All these works consider continuous-time dynamics and deterministic 
optimization problems, and derive results in terms of asymptotic or 
exponential stability. In contrast, here we focus on discrete-time 
stochastic LTI systems and stochastic optimization problems. 
Discrete dynamics were the focus of~\cite{MN-MM:19}, which is however 
limited to absence of disturbances.
Data-driven implementations of online optimization controllers have 
not been explored yet. 
A notable exception is~\cite{MN-MM:21}, which however does not account 
for the presence of noise, and results are limited to regret analysis. 
In contrast, when the disturbance terms are unknown, the distributions 
of the random variables that characterize the cost are parametrized by 
the decision variables, thus leading to a stochastic optimization 
problem with decision-dependent distributions, as studied 
in~\cite{JP-TZ-CM-MH:21,CM-JP-TZ-MH:20,DD-LX:20}. In this work, we 
build upon this class of problems, but accounting for two additional 
complexities: the online nature of the optimization method and the 
coupling with a dynamical system.

\noindent \textbf{Contributions.}
The contribution of this work is fourfold.
First, we show that the (steady-state) transfer function of a linear 
system can be computed from (non steady-state), finite-length 
input-output trajectories generated by the open-loop dynamics, without  
any knowledge or estimation of the system parameters. 
Second, we demonstrate that when the system is affected by unknown 
disturbances terms, the proposed data-driven framework can still be 
used to determine an approximate transfer function and, in this case, 
we explicitly characterize the approximation error.
A distinctive feature of our framework, and in contrast 
with~\cite{MY-AI-RS:20,HJVW-MKC-MM:20,AB-CD-PT:21,LX-MT-BG-GF:21},
is that we account for the presence of disturbances
affecting both the output equation and the model equation.
Third, we leverage this data-driven representation to propose a 
control method to regulate the system to an equilibrium point that 
is the solution of a stochastic optimization problem. Our design 
approach is  inspired by an online version of the stochastic projected 
gradient-descent algorithm \cite{SB:14}. One fundamental  challenge in 
the design of the  controller is that an error in the 
estimated transfer function leads to a stochastic optimization problem 
with decision-dependent distribution~\cite{DD-LX:20}, that is, the 
optimization variable induces a shift in the distribution of the 
underlying random variables that parametrize the cost. 
This is a class of problems whose direct  solution  is  intractable  
in  general \cite{JP-TZ-CM-MH:21}. To bypass this hurdle, we leverage 
the notion of \emph{stable optimizer}~\cite{JP-TZ-CM-MH:21,CM-JP-TZ-MH:20}, and we develop a stochastic controller that regulates the 
system towards such optimizer, up to an asymptotic error that depends 
on the time-variability of the optimization problem.
We show that the controller exhibits strict contractivity with respect to the stable optimizers in expectation, 
and  we explicitly quantify its transient performance.
Fourth, we study a real-time fleet management problem, where a ride 
service provider seeks to maximize its profit by dispatching its fleet 
while serving ride requests from its customers. We 
demonstrate the applicability and benefits of our methods numerically 
on a real network and demand data.

This paper generalizes the preliminary work \cite{GB-MV-JC-ED:21-cdc}
in several directions. First, we focus on optimization problems that 
are stochastic rather than deterministic. This fact raises new 
challenges in the development of optimization methods that account
for decision-dependent distributions. Second, we account for the 
presence of disturbances in the training  data. As an additional 
contribution, our treatment only requires the system to be observable,
instead of relying on direct state measurements. Fourth, we provide 
explicit (exponential) contraction bounds for the proposed control 
methods. Finally, here we illustrate the applicability of 
the methods to a ride-service scheduling problem for 
mobility-on-demand management.

\noindent \textbf{Organization.} 
The paper is organized as follows. Section~\ref{sec:2} presents some 
basic notions used in our work, in Section~\ref{sec:3} we formalize the 
problem of interest, Section~\ref{sec:4} illustrates our controller 
synthesis method, in Section~\ref{sec:5} we discuss data-driven 
techniques to compute transfer function of linear systems, 
and Section~\ref{sec:6} presents the controller analysis.
Section~\ref{sec:7} illustrates an application of the method to ride 
service scheduling and Section \ref{sec:8} concludes the paper.

\section{Preliminaries}
\label{sec:2}
In this section, we outline the notation and introduce some preliminary
concepts used throughout the paper.

\noindent \textbf{Notation.}
Given a symmetric matrix $M \in \real^{n \times n}$, $\underline \lambda(M)$ and 
$\bar \lambda(M)$ denote its smallest and largest eigenvalue, 
respectively; $M \succ 0$ indicates that $M$ is positive definite and 
$\Vert M \Vert_F$ denotes the Frobenius norm.
For a vector $u \in \real^n$, we denote 
$\norm{u}$ the Euclidean norm of $u$ and by $u^\top$ its transpose. For vectors $u \in \real^n, w \in \real^m$, 
we use the short-hand notation $(u,w) \in \real^{n+m}$ for their vector concatenation, i.e., $(u^\top, w^\top)^\top$.

%


\noindent \textbf{Persistency of Excitation.}
We next recall some useful facts on behavioral system theory 
from~\cite{JCW-PR-IM-BDM:05}. 
For a signal $k \mapsto z_k \in \real^\sigma$, $k \in \integ$, we 
denote the vectorization of $z$ restricted to the interval 
$[k, k + T ]$, $T \in \integpos$, by
\begin{align*}
 z_{[k,k+T]} = (z_k, \dots, z_{k+T}).
\end{align*}
Given $z_{[0,T-1]}$, $t \le T$, and $q\leq T-t+1$, we let $Z_{t,q}$ 
denote the Hankel matrix of length $t$ associated with $z_{[0,T-1]}$:
\begin{align*}
Z_{t,q} = \begin{bmatrix}
z_0 & z_1 & \hdots & z_{q-1}\\
z_1 & z_2 & \hdots & z_q\\
\vdots & \vdots & \ddots & \vdots\\
z_{t-1} & z_t & \hdots & z_{q+t-2}
\end{bmatrix}
\in \real^{\sigma t \times q}.
\end{align*}

Moreover, we use $[Z_{t,q}]_i$, $i \in \until{t}$ to denote the $i$-th 
block-row of  $Z_{t,q}$, namely, 
$[Z_{t,q}]_i = [z_{i-1},~z_i,~\dots,~z_{i+q-2}]$.

\smallskip
\begin{definition}{\bf \textit{(Persistently Exciting 
Signal~\cite{JCW-PR-IM-BDM:05})}}
The signal $z_{[0,T-1]}$, $z_k \in \real^\sigma$ for all 
$k \in \{0, \dots , T-1\}$, is persistently exciting of order $t$ if 
$Z_{t,q}$ has full row rank $\sigma t$.\QEDB
\end{definition}
\smallskip
We note that persistence of excitation implicitly requires 
$q \geq \sigma t$ (which in turns requires $T \geq (\sigma+1) t -1$).

Consider the linear dynamical system
\begin{align}
\label{eq:auxDynamicalSystem}
x_{k+1} &= A x_k + B u_k, & y_k &= C x_k + D u_k, 
\end{align}
with $x \in \real^{n}$, 
$u \in \real^{m}$, 
$A \in \real^{n\times n}$,
$B \in \real^{n\times m}$,
$C \in \real^{p\times n}$,
$D \in \real^{p\times m}$. 
Let $\mc C_\theta := [B, AB, A^2B, \dots, A^{\theta-1}B]$ and 
$\mc O_\nu:= [C^\tsp, A^\tsp C^\tsp, \dots, (A^\tsp)^{\nu-1} C^\tsp]^\tsp$ denote the controllability and observability matrices 
of~\eqref{eq:auxDynamicalSystem}, respectively.
The system
is controllable if $\rank(\mc C_\theta) = n$ for some 
$\theta \in \integpos$, and it is 
observable if $\rank(\mc O_\nu) = n$ for some $\nu \in \integpos$. 
The smallest integers $\mu,\nu,$ that satisfy the above 
conditions are the controllability and observability indices, 
respectively. We recall the following properties of 
\eqref{eq:auxDynamicalSystem} when its inputs are persistently 
exciting. 

\smallskip
\begin{lemma}{\textit{\textbf{(Fundamental Lemma~\cite[Corollary 2]{JCW-PR-IM-BDM:05}})}}
\label{lem:fundLemmarankHankelMatrix}
Assume \eqref{eq:auxDynamicalSystem} is controllable, let 
$(u_{[0,T-1]}, y_{[0,T-1]})$, $T \in \integ_{>0}$, be an 
input-output trajectory of \eqref{eq:plantModel}. 
If $u_{[0,T-1]}$ is persistently exciting of order $n+L$, then:
\begin{align*}
\rank \begin{bmatrix} U_{L,q}\\  X_{1,q} \end{bmatrix} 
= L m + n,
\end{align*}
where $U_{L,q}$ and $X_{1,q}$ denote the Hankel matrices associated 
with $u_{[0,T-1]}$ and $x_{[0,T-1]}$, respectively. \hfill $\Box$
\end{lemma}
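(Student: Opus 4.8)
The plan is to exploit that the matrix in question has exactly $mL+n$ rows, so its rank is at most $mL+n$; the whole task is therefore to prove that it has \emph{full row rank}, i.e.\ that its left kernel is trivial. First I would dispose of the easy half: since $u_{[0,T-1]}$ is persistently exciting of order $n+L$, it is in particular persistently exciting of order $L$, and hence the input block $U_{L,q}$ already has full row rank $mL$ on its own (its rows are the top $mL$ rows of a full-row-rank deeper Hankel matrix). Consequently any nontrivial left-kernel vector $(\eta^\top,\xi^\top)$, with $\eta\in\real^{mL}$ and $\xi\in\real^{n}$, must have $\xi\neq 0$: the only possible obstruction is the state block $X_{1,q}$ becoming linearly dependent on the rows of $U_{L,q}$. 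Written columnwise, the goal reduces to ruling out $\xi\neq 0$ and $\eta_0,\dots,\eta_{L-1}\in\real^m$ with
\[
\xi^\top x_k + \sum_{i=0}^{L-1}\eta_i^\top u_{i+k} = 0,\qquad k=0,\dots,q-1.
\]

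Second, I would eliminate the states from this coupled relation. Substituting the state-evolution formula $x_k = A^k x_0 + \sum_{j=0}^{k-1}A^{k-1-j}Bu_j$ turns each of the $q$ equations into a linear relation among the initial state $x_0$ and the inputs $u_0,\dots,u_{q+L-2}$, whose coefficients are built from $\xi$ (through the reachability terms $\xi^\top A^{k-1-j}B$ and the free-response terms $\xi^\top A^{k}$) and from $\eta$ (through the future-input weights $\eta_i$). Read over all $k$, this homogeneous system says that a specific functional determined by $(\eta,\xi)$ annihilates the input/initial-state data over a window of length $n+L$.

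Third, I would close the argument using persistency of excitation together with controllability. Since $u$ is PE of order $n+L$, the depth-$(n+L)$ input Hankel matrix has full row rank $m(n+L)$, so no nontrivial combination of inputs over a window of this length vanishes on the data unless its coefficients are zero; and since $(A,B)$ is controllable, the controllability matrix $\mc C_n=[B,AB,\dots,A^{n-1}B]$ has rank $n$, so $\xi^\top \mc C_n = 0$ forces $\xi=0$. These two facts, applied to the reorganized relation, yield first $\xi=0$ and then $\eta=0$, establishing the trivial left kernel and hence rank exactly $mL+n$.

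The hard part will be the index bookkeeping and, above all, the clean separation of the $\xi$-contribution from the $\eta$-contribution: one must arrange the substitution so that controllability (full rank of $\mc C_n$) and persistency of excitation (richness of the inputs over a length-$(n+L)$ window) can be applied to the two groups of unknowns without interference. This is precisely where the hypothesis that $u$ is excited at the elevated order $n+L$, rather than merely $L$, is indispensable, and where one must verify that $q$ — guaranteed large enough by $T \ge (m+1)(n+L)-1$ — keeps every invoked sub-Hankel block at full row rank.
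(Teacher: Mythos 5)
The paper itself states this lemma without proof, quoting it as~\cite[Corollary 2]{JCW-PR-IM-BDM:05}, so the right benchmark is the original argument of Willems et al., whose skeleton your sketch shares: show the left kernel of the stacked matrix is trivial, eliminate the state from the kernel relation $\xi^\tsp x_k + \sum_{i=0}^{L-1}\eta_i^\tsp u_{k+i}=0$, and invoke persistency of excitation of order $n+L$ together with controllability. The gap is in your second step, and it is a genuine one rather than bookkeeping. Anchoring the elimination at time $0$ via $x_k = A^k x_0 + \sum_{j=0}^{k-1}A^{k-1-j}Bu_j$ does not produce anything the PE hypothesis can consume: at column $k$ the resulting identity involves \emph{all} of $u_0,\dots,u_{k+L-1}$ --- a window whose length grows with $k$ --- and the free response enters with the column-dependent coefficient $\xi^\tsp A^k$. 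Persistency of excitation is exactly the statement that no single \emph{fixed} coefficient vector annihilates every length-$(n+L)$ slice $u_{[k,k+n+L-1]}$ of the data; it says nothing about relations whose coefficients vary with $k$, and nothing at all about $x_0$, which is a single vector, not Hankel data. So the sentence ``a specific functional determined by $(\eta,\xi)$ annihilates the input/initial-state data over a window of length $n+L$'' is precisely what your substitution fails to deliver.

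The missing idea is \emph{local} elimination of the state via Cayley--Hamilton. From the kernel relation at columns $k,\dots,k+n$ and $x_{k+j}=A^jx_k+\sum_{i=0}^{j-1}A^{j-1-i}Bu_{k+i}$, one gets $\xi^\tsp A^j x_k = -\sum_{i=0}^{L-1}\eta_i^\tsp u_{k+j+i} - \sum_{i=0}^{j-1}\xi^\tsp A^{j-1-i}Bu_{k+i}$; multiplying by the coefficients $c_j$ of the characteristic polynomial (with $c_n:=1$) and summing kills the state:
\begin{align*}
0 \;=\; \xi^\tsp\Big(\sum_{j=0}^{n}c_jA^j\Big)x_k
\;=\; -\sum_{j=0}^{n}c_j\Big(\sum_{i=0}^{L-1}\eta_i^\tsp u_{k+j+i}
\;+\;\sum_{i=0}^{j-1}\xi^\tsp A^{j-1-i}B\,u_{k+i}\Big),
\end{align*}
a relation with $k$-independent coefficients on the fixed window $u_{[k,k+n+L-1]}$, valid for $k=0,\dots,q-1-n$, i.e., on exactly the $T-(n+L)+1$ columns of the depth-$(n+L)$ input Hankel matrix. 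PE then forces every block coefficient to vanish. Reading the blocks from the highest time index downward yields $\eta_{L-1}=\cdots=\eta_0=0$ first (only the $j=n$ term, with leading coefficient $1$, touches $u_{k+n+L-1}$, and the structure is triangular); the remaining blocks then give $\xi^\tsp A^jB=0$ for $j\in\{0,\dots,n-1\}$, i.e., $\xi^\tsp\mc C_n=0$, whence $\xi=0$ by controllability. Note this is the opposite order to the ``first $\xi=0$ and then $\eta=0$'' you announced. With this replacement your outline becomes the standard proof of the fundamental lemma; as written, the elimination step fails and everything downstream of it does not go through.
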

\smallskip

\begin{lemma}{\bf \textit{(Data characterizes Full Behavior~\cite[Theorem 1]{JCW-PR-IM-BDM:05})}}
\label{lem:fundLemmaExistenceg}
Assume \eqref{eq:auxDynamicalSystem} is controllable and observable, let 
$(u_{[0,T-1]}, y_{[0,T-1]})$, $T \in \integ_{>0}$, be an input-output 
trajectory of \eqref{eq:plantModel}. 
If $u_{[0,T-1]}$ is persistently exciting of order $n+L$, then
any pair of $L$-long signals 
$(\tilde u_{[0,L-1]}, \tilde y_{[0,L-1]})$ is an input-output 
trajectory of \eqref{eq:plantModel} if and only if there  exists
$\alpha \in \real^q$ such that
\begin{align*}
\begin{bmatrix}\tilde u_{[0,L-1]}\\ \tilde y_{[0,L-1]}\end{bmatrix}
= 
\begin{bmatrix} U_{L,q} \\ Y_{L,q}\end{bmatrix}
\alpha,
\end{align*}
where $U_{L,q}$ and $Y_{L,q}$ denote the Hankel matrices associated 
with $u_{[0,T-1]}$ and $y_{[0,T-1]}$, respectively. \hfill $\Box$
\end{lemma}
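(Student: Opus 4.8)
The plan is to prove the two implications separately, with the reverse (``if'') direction being elementary and the forward (``only if'') direction resting on a dimension count. For the reverse implication, I would first observe that each column of $\begin{bmatrix} U_{L,q} \\ Y_{L,q}\end{bmatrix}$ is, by construction, a length-$L$ window $(u_{[j,j+L-1]}, y_{[j,j+L-1]})$ of the given trajectory, hence itself an input-output trajectory of~\eqref{eq:auxDynamicalSystem}. Since the system is linear and time-invariant, the set $\mc B_L$ of its length-$L$ input-output trajectories is a linear subspace of $\real^{(m+p)L}$; therefore any linear combination $\begin{bmatrix} U_{L,q} \\ Y_{L,q}\end{bmatrix}\alpha$ is again an element of $\mc B_L$, i.e., a valid trajectory. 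This already shows that the column span of the Hankel matrix is contained in $\mc B_L$.

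The forward implication amounts to the reverse inclusion, namely that the column span of $\begin{bmatrix} U_{L,q} \\ Y_{L,q}\end{bmatrix}$ is \emph{all} of $\mc B_L$; since one containment is in hand, it suffices to match dimensions. I would first compute $\dim \mc B_L$: a length-$L$ trajectory is determined by the pair $(x_0, u_{[0,L-1]})$ through the linear map $(x_0, u_{[0,L-1]}) \mapsto (u_{[0,L-1]}, y_{[0,L-1]})$, whose image is exactly $\mc B_L$. This map is injective, because with equal input parts the outputs differ only through $\mc O_L(x_0 - x_0')$, and observability ensures $\mc O_L$ has full column rank (for $L$ at least the observability index), forcing $x_0 = x_0'$. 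Hence $\dim \mc B_L = n + mL$.

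Next I would show $\rank \begin{bmatrix} U_{L,q} \\ Y_{L,q}\end{bmatrix} = mL + n$ as well, which is where Lemma~\ref{lem:fundLemmarankHankelMatrix} enters. Stacking the output equation over each window yields the factorization
\begin{align*}
\begin{bmatrix} U_{L,q} \\ Y_{L,q}\end{bmatrix} = \begin{bmatrix} I & 0 \\ \mc T_L & \mc O_L \end{bmatrix} \begin{bmatrix} U_{L,q} \\ X_{1,q}\end{bmatrix},
\end{align*}
where $\mc T_L$ is the block-Toeplitz matrix of Markov parameters and $\mc O_L$ the observability matrix. The left factor is block lower-triangular with an identity block and the full-column-rank block $\mc O_L$, hence has full column rank $mL + n$; left-multiplying the right factor, whose rank is $mL + n$ by Lemma~\ref{lem:fundLemmarankHankelMatrix}, by an injective matrix preserves that rank. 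Thus $\rank \begin{bmatrix} U_{L,q} \\ Y_{L,q}\end{bmatrix} = mL + n = \dim \mc B_L$, and together with the inclusion from the first paragraph the column span coincides with $\mc B_L$. Any $(\tilde u_{[0,L-1]}, \tilde y_{[0,L-1]}) \in \mc B_L$ then lies in this span, producing the required $\alpha$.

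I expect the main obstacle to be this rank bridge: Lemma~\ref{lem:fundLemmarankHankelMatrix} controls the \emph{state} Hankel matrix, whereas the claim concerns the \emph{output} Hankel matrix, so one must invoke observability in two distinct places — once to pin down $\dim \mc B_L$ via injectivity of the trajectory map, and once to certify that the triangular factor $\begin{bmatrix} I & 0 \\ \mc T_L & \mc O_L \end{bmatrix}$ has full column rank — in order to transfer the count correctly. Care is also needed to ensure $L$ is large enough that $\mc O_L$ attains rank $n$.
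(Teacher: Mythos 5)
The paper itself offers no proof of this lemma: it is imported verbatim as Theorem~1 of the fundamental-lemma paper \cite{JCW-PR-IM-BDM:05} (hence the terminal $\Box$), so there is no in-paper argument to match against. Your proposal is correct, and it is essentially the modern state-space proof of Willems' result (the original proof is carried out in the behavioral framework, without the factorization you use). Both halves check out: the ``if'' direction follows since each Hankel column is a time-shifted window of the data and the length-$L$ trajectory set $\mc B_L$ is a subspace by linearity and time-invariance; the ``only if'' direction follows from the dimension count, where the factorization $\bigl[\begin{smallmatrix} U_{L,q} \\ Y_{L,q} \end{smallmatrix}\bigr] = \bigl[\begin{smallmatrix} I & 0 \\ \mc T_L & \mc O_L \end{smallmatrix}\bigr]\bigl[\begin{smallmatrix} U_{L,q} \\ X_{1,q} \end{smallmatrix}\bigr]$ correctly transfers the rank $mL+n$ of the state-level matrix (Lemma~\ref{lem:fundLemmarankHankelMatrix}) to the output-level matrix, matching $\dim \mc B_L = mL + n$. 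The one refinement worth recording concerns the caveat you flag at the end: your argument as written needs $L \geq \nu$ so that $\mc O_L$ has rank $n$, but the lemma is in fact true for all $L$, and your proof extends with one extra line. Indeed, for $L < \nu$ the two counts degrade \emph{consistently}: the kernel of the trajectory map $(x_0, u_{[0,L-1]}) \mapsto (u_{[0,L-1]}, y_{[0,L-1]})$ is $\{(x_0,0) : \mc O_L x_0 = 0\}$, so $\dim \mc B_L = mL + \rank \mc O_L$; and since Lemma~\ref{lem:fundLemmarankHankelMatrix} makes $\bigl[\begin{smallmatrix} U_{L,q} \\ X_{1,q} \end{smallmatrix}\bigr]$ full \emph{row} rank, its column space is all of $\real^{mL+n}$, whence $\rank \bigl[\begin{smallmatrix} U_{L,q} \\ Y_{L,q} \end{smallmatrix}\bigr] = \rank \bigl[\begin{smallmatrix} I & 0 \\ \mc T_L & \mc O_L \end{smallmatrix}\bigr] = mL + \rank \mc O_L$ as well, so span and trajectory set still coincide. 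That said, every invocation of the lemma in this paper (e.g., in the proof of Theorem~\ref{thm:data-drivenTransferFunction} and Proposition~\ref{prop:errorG}) uses windows of length exactly $L = \nu$, so your restricted version already covers all of the paper's needs.
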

\smallskip

In words, persistently exciting signals generate output trajectories
that can be used to express any other trajectory.

\noindent \textbf{Probability Theory.}
Let $(\Omega, \mc F, P)$ be a probability space and $z$ be a random 
variable mapping this space to $(\real^d, B_\sigma(\real^m))$, where 
$B_\sigma(\real^d)$ is the Borel $\sigma$-algebra on $\real^d$. 
Let $\mc P$ be the distribution of $z$ and $\Xi \subseteq \real^m$ be 
the support of $\mc P$.
We use $z \sim \mc P$ to denote that $z$ is distributed according 
to $\mc P$, and $\mathbb{E}_{z \smallsim \mc P}[\cdot]$ to denote the 
expectation under $\mc P$.
Let $\mc M(\Xi)$ be the space of all probability distributions 
supported on $\Xi$ with finite first moment, i.e., 
$\mathbb{E}_{z \sim \mc P}[\norm{z}] = \int_\Xi \norm{z} \mc P(dz) < \infty$
for all $\mc P \in \mc M(\Xi)$.
The Wasserstein-1 metric is:
\begin{align*}
W_1(\mc P_1, \mc P_2) := 
\inf_{\Pi \in \mc H(P_1, P_2)} 
\left \{ 
\int_{\Xi^2} \norm{z_1 - z_2} \Pi(d z_1, d z_2)
\right\},
\end{align*}
where $\mc H(P_1, P_2)$ is the set of all joint distributions with 
marginals $\mc P_1$ and $\mc P_2$.
By interpreting the decision function $\Pi$ as a transportation plan 
for moving a mass distribution described by $\mc P_1$ to another one
described by $\mc P_2$, the Wasserstein distance 
$W_1(\mc P_1, \mc P_2)$ represents the cost of an optimal mass 
transportation plan, where the transportation costs is described by 
the 1-Euclidean norm.

\begin{theorem}{\bf \textit{(Kantorovich-Rubinstein, \cite{LK-SR:58})}}
\label{thm:KR}
For any pair of distributions $\mc P_1, \mc P_2 \in \mc M(\Xi)$, the 
following holds
\begin{align*}
W_1(\mc P_1, \mc P_2) = \sup_{g \in \mc L_1} 
\left\{ 
\E{z_1 \smallsim \mc P_1}{g(z_1)} - \E{z_2 \smallsim \mc P_2}{g(z_2)} 
\right\},
\end{align*}
where $\mc L_1$ denotes the space of all $1$-Lipschitz functions, 
i.e.,
\begin{align*}
\mc L_1:= \setdef{\map{g}{\Xi}{\real^m}}{\norm{g(z_1) - g(z_2)} \leq \norm{z_1-z_2}}.
\end{align*}
\end{theorem}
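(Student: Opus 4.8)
The plan is to prove the two inequalities separately: an elementary bound giving that the supremum is at most $W_1$, and the reverse bound $W_1 \leq \sup$ obtained from the general Kantorovich duality followed by a reduction to a single Lipschitz function.

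First I would dispatch the easy direction, namely that the supremum is at most $W_1(\mc P_1,\mc P_2)$. Fix any $g \in \mc L_1$ and any joint distribution $\Pi \in \mc H(\mc P_1,\mc P_2)$. Because $\Pi$ has marginals $\mc P_1$ and $\mc P_2$, the quantity inside the supremum can be written as a single integral against $\Pi$,
\[
\E{z_1 \smallsim \mc P_1}{g(z_1)} - \E{z_2 \smallsim \mc P_2}{g(z_2)} = \int_{\Xi^2} \left( g(z_1) - g(z_2) \right) \Pi(dz_1,dz_2).
\]
The $1$-Lipschitz property yields $g(z_1) - g(z_2) \leq \norm{z_1 - z_2}$ pointwise, so the right-hand side is bounded by $\int_{\Xi^2} \norm{z_1-z_2}\,\Pi(dz_1,dz_2)$. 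Since the left-hand side does not depend on $\Pi$, taking the infimum over $\Pi \in \mc H(\mc P_1,\mc P_2)$ and then the supremum over $g \in \mc L_1$ gives $\sup_{g \in \mc L_1}\{\cdots\} \leq W_1(\mc P_1,\mc P_2)$.

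The substantive part is the reverse inequality, for which I would invoke the general Kantorovich duality for the cost $c(z_1,z_2) = \norm{z_1-z_2}$,
\[
W_1(\mc P_1,\mc P_2) = \sup_{(\phi,\psi)} \left\{ \int_\Xi \phi\, d\mc P_1 + \int_\Xi \psi\, d\mc P_2 \right\},
\]
where the supremum is over integrable pairs $(\phi,\psi)$ obeying the pointwise constraint $\phi(z_1) + \psi(z_2) \leq \norm{z_1-z_2}$. It then remains to show that restricting to pairs of the form $(g,-g)$ with $g \in \mc L_1$ does not decrease the dual value. To this end, given any admissible $(\phi,\psi)$, I would introduce its $c$-transform $g(z) := \inf_{w \in \Xi} \left\{ \norm{z-w} - \psi(w) \right\}$ and record three facts: (i) $g$ is $1$-Lipschitz, being an infimum of the maps $z \mapsto \norm{z-w} - \psi(w)$, each $1$-Lipschitz in $z$; (ii) the admissibility constraint $\phi(z) \leq \norm{z-w} - \psi(w)$ for all $w$ forces $\phi(z) \leq g(z)$; and (iii) evaluating the infimum at $w=z$ gives $g(z) \leq -\psi(z)$, i.e.\ $\psi(z) \leq -g(z)$. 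Integrating against the (nonnegative) measures $\mc P_1,\mc P_2$ then yields
\[
\int_\Xi \phi\, d\mc P_1 + \int_\Xi \psi\, d\mc P_2 \leq \int_\Xi g\, d\mc P_1 - \int_\Xi g\, d\mc P_2 = \E{z_1 \smallsim \mc P_1}{g(z_1)} - \E{z_2 \smallsim \mc P_2}{g(z_2)},
\]
with $g \in \mc L_1$. Taking the supremum over admissible pairs recovers $W_1(\mc P_1,\mc P_2)$ on the left, so $W_1(\mc P_1,\mc P_2) \leq \sup_{g \in \mc L_1}\{\cdots\}$, and combining with the first paragraph gives equality.

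I expect the main obstacle to be the Kantorovich duality itself, i.e.\ the absence of a duality gap between the primal transport problem defining $W_1$ and its dual. Establishing this rigorously requires a functional-analytic argument (for instance Fenchel--Rockafellar duality in a suitable space of continuous functions, together with a minimax/compactness step justifying the interchange of infimum and supremum), plus attention to the measurability and integrability of $\phi$ and $\psi$. The finite-first-moment hypothesis $\mc P_1,\mc P_2 \in \mc M(\Xi)$ is precisely what keeps all integrals above finite and the potentials integrable; for the purposes of this paper I would take the duality as classical and cite~\cite{LK-SR:58}, so that the only original content is the elementary bound and the $c$-transform reduction developed above.
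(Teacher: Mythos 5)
The paper does not actually prove Theorem~\ref{thm:KR}: it imports the result as classical, citing \cite{LK-SR:58}, and uses it only once, inside the proof of Lemma~\ref{lem:expectationDeviation}. Your proposal therefore supplies more than the paper does, and it is correct as far as it goes. The easy inequality is handled exactly right: rewrite the difference of expectations against any coupling $\Pi \in \mc H(\mc P_1, \mc P_2)$, apply the pointwise Lipschitz bound, then take the infimum over $\Pi$ and the supremum over $g$. The reduction of the two-potential Kantorovich dual to a single potential via the $c$-transform $g(z) = \inf_{w}\{\norm{z-w} - \psi(w)\}$ is the standard argument, and your facts (i)--(iii) are each sound; the points worth pinning down are that $g$ is finite (which follows from your (ii), $\phi \le g$, provided $\phi$ is real-valued) and that $g$ is $\mc P_i$-integrable, which the finite-first-moment hypothesis defining $\mc M(\Xi)$ guarantees since a $1$-Lipschitz function grows at most linearly --- and you flag both. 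You also correctly identify the genuine analytic core, the absence of a duality gap in Kantorovich duality, as the step that cannot be done in a few elementary lines, and deferring it to \cite{LK-SR:58} matches the paper's own stance of citing the theorem wholesale. One caveat: your argument, like the classical theorem, requires $g$ to be \emph{real-valued} --- you use the scalar inequality $g(z_1)-g(z_2) \le \norm{z_1-z_2}$, and the supremum of a vector-valued difference of expectations is not even well defined --- whereas the paper's definition of $\mc L_1$ as maps $\map{g}{\Xi}{\real^m}$ is evidently a typo for $\map{g}{\Xi}{\real}$, consistent with its sole application in Lemma~\ref{lem:expectationDeviation}, where $g(z) = v^\tsp f(z)$ is scalar. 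Read your proof as establishing the intended scalar statement, and it stands.
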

\smallskip


The following result is instrumental for our analysis. We provide a 
short proof for completeness.
\smallskip
\begin{lemma}{\bf \textit{(Deviation Between Expectations 
\cite[Lemma C.4]{JP-TZ-CM-MH:21})}}
\label{lem:expectationDeviation}
Let $\map{f}{\real^n}{\real^d}$ be $L$-Lipschitz continuous.
Then, for any pair of distributions $\mc P_1, \mc P_2 \in \mc M(\Xi)$,
\begin{align*}
\norm{\E{z_1 \smallsim \mc P_1}{f(z_1)} - \E{z_2 \smallsim \mc P_2}{f(z_2)}} 
\leq  L W_1(\mc P_1, \mc P_2).
\end{align*}
\end{lemma}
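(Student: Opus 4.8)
The plan is to reduce the vector-valued statement to the scalar Kantorovich--Rubinstein duality of Theorem~\ref{thm:KR}. The main obstacle is that $W_1$ is defined through scalar ($1$-Lipschitz, $\real$-valued) test functions, whereas here $f$ is $\real^d$-valued and $L$-Lipschitz; the difference of expectations $\E{z_1 \smallsim \mc P_1}{f(z_1)} - \E{z_2 \smallsim \mc P_2}{f(z_2)}$ is a vector, so I cannot apply the duality directly. The standard device to handle this is to pass to the dual norm by testing against an arbitrary fixed unit vector.

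First I would fix an arbitrary vector $v \in \real^d$ with $\norm{v} = 1$ and consider the scalar function $g_v(z) := v^\top f(z)$. By Cauchy--Schwarz, for any $z_1, z_2$ we have $|g_v(z_1) - g_v(z_2)| = |v^\top (f(z_1) - f(z_2))| \le \norm{v}\,\norm{f(z_1) - f(z_2)} \le L \norm{z_1 - z_2}$, so $g_v / L$ is $1$-Lipschitz and hence $g_v / L \in \mc L_1$. Applying Theorem~\ref{thm:KR} to $g_v / L$ then yields
\begin{align*}
v^\top \left( \E{z_1 \smallsim \mc P_1}{f(z_1)} - \E{z_2 \smallsim \mc P_2}{f(z_2)} \right) = L \left( \E{z_1 \smallsim \mc P_1}{\tfrac{g_v(z_1)}{L}} - \E{z_2 \smallsim \mc P_2}{\tfrac{g_v(z_2)}{L}} \right) \le L\, W_1(\mc P_1, \mc P_2),
\end{align*}
where I have used linearity of expectation to pull $v^\top$ inside and the supremum characterization of $W_1$ as an upper bound over $\mc L_1$.

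The final step is to eliminate the dependence on $v$. Writing $w := \E{z_1 \smallsim \mc P_1}{f(z_1)} - \E{z_2 \smallsim \mc P_2}{f(z_2)} \in \real^d$, the previous display states $v^\top w \le L\, W_1(\mc P_1, \mc P_2)$ for every unit vector $v$. Taking the supremum over all such $v$ and using the variational characterization of the Euclidean norm, $\norm{w} = \sup_{\norm{v}=1} v^\top w$, gives exactly $\norm{w} \le L\, W_1(\mc P_1, \mc P_2)$, which is the claimed inequality. One technical point I would verify is that $v^\top f$ is integrable against each $\mc P_i$ so that the expectations are well defined; this follows from the $L$-Lipschitz continuity of $f$, which implies $\norm{f(z)} \le \norm{f(z_0)} + L\norm{z - z_0}$ for a fixed reference point $z_0$, and the finite-first-moment assumption $\mc P_i \in \mc M(\Xi)$.
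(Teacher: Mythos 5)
Your proof is correct and follows essentially the same route as the paper: both reduce to the scalar Kantorovich--Rubinstein duality by testing $f$ against a unit vector $v$, the only cosmetic difference being that you take a supremum over all unit vectors while the paper directly picks the maximizing choice $v = w/\norm{w}$. Your explicit rescaling of $g_v$ by $L$ (so that it lies in $\mc L_1$) and the integrability check are small refinements of details the paper glosses over.
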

\smallskip
\begin{proof}
Let $v \in \real^d$ be any unit vector and let 
$g(z) := v^\tsp f(z)$.
By assumption, $g(z)$ is $L$-Lipschitz continuous and therefore:
\begin{align*}
v^\tsp 
(& \E{z_1 \smallsim \mc P_1}{f(z_1)} - \E{z_2 \smallsim \mc P_2}{f(z_2)})\\
&\quad\quad = \E{z_1 \smallsim \mc P_1}{v^\tsp f(z_1)} - \E{z_2 \smallsim \mc P_2}{v^\tsp f(z_2)})\\
&\quad\quad= 
\E{z \smallsim \mc{Z}(\alpha)}{g(z)} - 
\E{z \smallsim \mc{Z}(\beta)}{g(z)}  \leq 
L W_1(\alpha, \beta),
\end{align*}
where the last inequality follows by application of 
Theorem~\ref{thm:KR}.
The result then follows by choosing 
\begin{align*}
v = \frac{\E{z_1 \smallsim \mc P_1}{f(z_1)} - \E{z_2 \smallsim \mc P_2}{f(z_2)}}{\norm{\E{z_1 \smallsim \mc P_1}{f(z_1)} - \E{z_2 \smallsim \mc P_2}{f(z_2)}}}.
\end{align*}
\end{proof}

\section{Problem Formulation}
\label{sec:3}
In this section, we present the problem that is the focus of this work
and we discuss a tractable reformulation used for our controller 
synthesis.

\subsection{Steady-State Regulation Problem for Linear Systems}
We consider discrete-time systems with linear dynamics:
\begin{align}
\label{eq:plantModel}
x_{k+1} &=  A x_{k} + B u_{k} + E w_{k}, & 
y_k &= C x_{k} + D w_k,
\end{align}%
where $k \in \integpos$ is the time index, 
$x_k \in \real^n$ is the state, $u_k \in \real^m$ denotes the 
control decision at time $k$, $w_k \in \real^r$ is an unknown 
exogenous stochastic disturbance with unknown distribution 
$w_k \sim \mc W_k$, and $y_k \in \real^p$ is the measurable 
output. We make the following assumptions on~\eqref{eq:plantModel}.

\begin{assumption}{\textit{\textbf{(System Properties)}}}
\label{ass:stabilityPlant}
The system~\eqref{eq:plantModel} is controllable and observable. 
Moreover, the matrix $A$ is Schur stable, i.e., for any $Q \succ 0$, 
there exists $P \succ 0$ such that $A^\tsp P A - P = -Q$.~\QEDB
\end{assumption}
\smallskip

Our control objective is to regulate \eqref{eq:plantModel} to the 
solutions of the following steady-state optimization problem at every 
time~$k$:
\begin{subequations}
\label{opt:objectiveproblem}
\begin{align}
\label{opt:objectiveproblem-a}
(u^*_k, x_k^*, y_k^*) \in \arg 
\min_{\bar u, \bar x, \bar y} \;\;\; 
& \E{w_k \smallsim \mc W_k}{\phi(\bar u, \bar y)}\\
\label{opt:objectiveproblem-b}
\text{s.t.} \;\;\;  & 
\bar x  = A \bar x  + B \bar u + E w_k,\\
\label{opt:objectiveproblem-c}
&\bar y = C \bar x + D w_k,
\end{align}
\end{subequations}
where $\map{\phi}{\real^m \times \real^p}{\real}$ denotes a cost
function that models losses associated with the control inputs and 
system outputs. 
Problem \eqref{opt:objectiveproblem} formalizes an 
equilibrium-selection problem, where the objective is to select an 
optimal input-state-output triple  $(u^*_k, x_k^*, y_k^*)$ that 
minimizes the expected cost specified by~$\phi$. We note that the 
optimization is time-varying because the distribution of $w_k$ is 
time-varying. 

\smallskip
\begin{remark}{\bf \textit{(Relationship with Classical Output Regulation Problem)}}
We note that although \eqref{opt:objectiveproblem} formalizes an 
optimal regulation problem with steady-state constraints similar to
the well-established output-regulation problems~\cite{ED:76}, with 
respect to the classical framework in our setting the optimal 
trajectories are not generated by an exosystems (i.e., a known 
autonomous linear model) but instead are specified as the solution of 
an optimization problem.
\QEDB
\end{remark}

We impose the following regularity assumptions on $\phi(u,y)$.

\begin{assumption}{\bf \!\textit{(Lipschitz and Convexity of Cost Function)\!}}
\label{ass:lipschitz-convexity}
\begin{enumerate}
\item[(a)] For any fixed $u \in \real^m$, the map $y \mapsto \phi(u,y)$ is $\ell$-Lipschitz continuous.
\item[(b)] For any fixed $y \in \real^p$, the map 
$u \mapsto \nabla \phi(u,y)$ is $\ell_u^\nabla$-Lipschitz 
continuous, and for any fixed $u \in \real^m$ the map
$y \mapsto \nabla \phi(u,y)$ is $\ell_y^\nabla$-Lipschitz 
continuous.
\item[(c)] For any fixed $y \in \real^p$, the map $u \mapsto \phi(u,y)$ is $\mu$-strongly 
convex, i.e., there exists $\mu \in \real_{>0}$ such that,
for all $u, u' \in \real^{m}$,
$\phi(u,y) \geq  \phi(u',y) + \nabla \phi(u',y)^\tsp ((u,y)-(u',y)) + \frac{\mu}{2} \norm{u-u'}$.
\QEDB
\end{enumerate}
\end{assumption}
\medskip


Strong convexity and Lipschitz-type assumptions impose basic conditions 
on the growth of the cost function often used for the analysis of 
first-order optimization methods~\cite{SB-LV:04}. Under strong 
convexity, the function $\phi$ 
admits a unique critical point $(u^*_k, y_k^*)$ that is also a global 
optimizer (see e.g.~\cite{HK-JN-MS:16}). Notice that, under Assumption 
\ref{ass:stabilityPlant}, $x_k^* = (A-I)^\inv (B u_k^* + E w_k)$ is
also unique since the linear map $(A-I)^\inv$ has an empty null space.
Hence, in what follows we use $(u^*_k, x_k^*, y_k^*)$ to denote 
the unique solution of~\eqref{opt:objectiveproblem}.
We formalize the problem that is the focus of this work next.

\smallskip
\begin{problem}
\label{prob:1}
Design an output-feedback controller of the form 
$u_{k+1} = \mathcal{C}(u_k,y_k)$ such that, without any prior 
knowledge of the matrices $(A,B,C,D,E)$ as well as of the 
noise distributions~$\mc W_k$, the input and output of 
\eqref{eq:plantModel} converge asymptotically to the 
time-varying optimizer of \eqref{opt:objectiveproblem}. \QEDB
\end{problem}

\subsection{Problem Reformulation for Unknown Dynamics}
Since the optimization problem \eqref{opt:objectiveproblem} contains 
only equality constraints, Assumption \ref{ass:stabilityPlant} can be 
used to recast it as an unconstrained optimization problem, as 
described next.
For any fixed $u \in \real^m$ and $w \in \real^r$, 
Assumption~\ref{ass:stabilityPlant} guarantees that 
\eqref{eq:plantModel} has a unique exponentially-stable equilibrium 
point $x =(I-A)^\inv (B u+E w)$.
At the equilibrium, the dependence between system inputs and 
outputs is given by
\begin{align}
\label{eq:yTransferFunctions}
y = \underbrace{C (I-A)^\inv B}_{:=G} u
+ \underbrace{(D+C (I-A)^\inv E)}_{:=H} w.
\end{align}
By using the above representation, \eqref{opt:objectiveproblem} can 
be rewritten as:
\begin{align}
\label{opt:objectiveproblem_GH}
u_k^* = \arg \min_{\bar u} \;\;\; 
& \E{w_k\smallsim \mc W_k}{\phi(\bar u, G \bar u + H w_k)}.
\end{align}

Next, we observe that the optimization 
problem~\eqref{opt:objectiveproblem_GH} is parametrized by 
the matrix $G$ as well as by the term $Hw_k$, which are not known 
when the system dynamics~\eqref{eq:plantModel} are unknown. When an 
approximation $\hat G$ of $G$ is available, the optimization 
problem~\eqref{opt:objectiveproblem_GH} can be equivalently rewritten 
as:
\begin{align}
\label{opt:objectiveproblem_2}
u_k^* = \arg \min_{\bar u} \;\;\; 
& \E{\bar z \smallsim \mc{Z}_k(\bar u)}{\phi(\bar u, 
\hat G \bar u + \bar z)},
\end{align}
where $\bar z := (G - \hat G) \bar u + H w_k$ is a random variable that 
encodes the lack of knowledge of the map $G$ as well as of the term 
$H w_k$. We note that the distribution of $\bar z$ is parametrized by 
the decision variable $\bar u$ and, in order to emphasize such 
dependency, in what follows we use the notation\footnote{%
To ease the notation in what follows we denote
$\mathbb{E}_{\bar z \smallsim \mc{Z}_k(\bar u)}{[\cdot]}$ in compact 
form as $\mathbb{E}_{\mc{Z}_k(\bar u)}{[\cdot]}$, since the random 
variable with respect to which the expectation is taken is made 
clear in the argument.}
$\bar z \sim \mc Z_k(\bar u)$.
From an optimization perspective, seeking a solution 
of~\eqref{opt:objectiveproblem_2} raises three main challenges:
\begin{enumerate}

\item[(Ch1)] Because the distribution of $\bar z$ is parametrized by 
the decision variable $\bar u$, the resulting cost function is 
nonlinear in $\bar u$, thus making the optimization 
problem~\eqref{opt:objectiveproblem_2}  intractable for general costs 
(even when $u \mapsto \phi(u,z)$ is convex).

\item[(Ch2)] Since the distribution of the disturbance $w_k$ is 
unknown, the distribution $\mc Z_k(\bar u)$ is also unknown for any 
$\bar u$. Instead, we only have access to evaluations of the random 
variable $\bar z$ via measurements of the output $y_k$ of the 
system~\eqref{eq:plantModel}.
This fact calls for the development of control methods that can adjust
the input $u_k$ based on noisy evaluations of the cost function 
through access to the system output $y_k$.

\item[(Ch3)] The closed-form expression for $G$ depends on the 
system matrices $(A,B,C)$ (cf. \eqref{eq:yTransferFunctions}), which 
are unknown. This raises the question of how to construct an 
approximate map $\hat G$ and how to quantify the approximation error. 
\end{enumerate}

The subsequent sections address the above challenges. 
Precisely, (Ch1) is addressed in Section~\ref{sec:4}.A, while in 
Section~\ref{sec:4}.B we propose a control method to address (Ch2).
Section \ref{sec:4} illustrates a data-driven technique to tackle 
(Ch3), while Section~\ref{sec:6} combines the methods by analyzing the
performance of the control technique.

\section{Synthesis of Online Optimization Controllers for Unknown Linear Systems}
\label{sec:4}
In this section, we tackle challenge (Ch1) and we show that the 
optimization problem \eqref{opt:objectiveproblem_2} can be regarded as 
a standard stochastic optimization problem by accounting for worst-case 
shifts in the distribution. 
Moreover, we use techniques from online optimization to address 
challenge (Ch2).

\subsection{Notion of Stable Optimizer}
Because of the direct dependence between the random variable $\bar z$ 
and the decision variable $\bar u$ in \eqref{opt:objectiveproblem_2}, 
explicit solution of  the optimization problem 
\eqref{opt:objectiveproblem_2} are out of reach in general.
For this reason, we focus on the problem of making online control 
decisions $u_k$ such that, when $u_k$ is applied as an input to 
\eqref{eq:plantModel}, the resulting cost is optimal for the 
distribution induced on the random variable $\bar z$. This concept is 
formalized in the following definition, inspired 
by~\cite{JP-TZ-CM-MH:21}.

\begin{definition}{\bf \textit{(Stable Optimizer)}} 
\label{def:stableOptimizer}
The vector 
$\sps{u}{so}_k \in \real^m$ is a stable optimizer of 
\eqref{opt:objectiveproblem_2} at time $k \in \integpos$ if:
\begin{align}
\label{eq:stableOptimizer}
\sps{u}{so}_k = \arg \min_{\bar u} \;\;\; 
& \E{\mc{Z}_k (\sps{u}{so}_k)}{
\phi(\bar u, \hat G \bar u + \bar z)}.
\end{align}
Accordingly, we let 
$\sps{x}{so}_{k} := (I-A)^\inv (B \sps{u}{so}_{k}+E w_k)$.
\QEDB
\end{definition}
\smallskip

In words, $\sps{u}{so}_k$ is a stable optimizer if it solves the 
optimization problem that originates by fixing the distribution of 
$\bar z$ to $\mc{Z}_k (\sps{u}{so}_k)$.
Accounting for stable optimizers allows us to make the optimization 
problem~\eqref{opt:objectiveproblem_2} tractable, since standard 
gradient-descent methods can be adapted to converge to $\sps{u}{so}_k$
(as we show in Section \ref{sec:6}).
Moreover, convergence to a stable optimizer is desirable because it 
guarantees that $\sps{u}{so}_k$ is optimal for the distribution that 
it~induces.
Stable optimizers, in general, may not coincide with the optimizers 
of \eqref{opt:objectiveproblem_2}, and their existence and uniqueness 
is guaranteed under suitable technical assumptions (see 
Theorem~\ref{thm:trackingBoudn}). However, an explicit error bound can 
be derived under suitable smoothness assumptions, as shown~next.
\begin{proposition}{\bf \textit{(Optimizer Gap)}}
\label{prop:errorAtEquilibrium}
Let Assumption \ref{ass:lipschitz-convexity} be satisfied, let $u_k^*$ 
be the optimizer of \eqref{opt:objectiveproblem_2}, and let 
$\sps{u}{so}_k$ be a stable optimizer as defined in 
\eqref{eq:stableOptimizer}. Then, 
\begin{align}
\label{eq:errorSouStar}
\norm{u_k^* - \sps{u}{so}_k} \leq 
\frac{2 \ell \norm{G - \hat G}}{\mu \sbs{\sigma}{min}^2(\hat G)},
\end{align}
where $\sbs{\sigma}{min}^2(\hat G)$ denotes the smallest singular value of $\hat G$.
\end{proposition}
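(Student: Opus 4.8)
The plan is to characterize both $u_k^*$ and $\sps{u}{so}_k$ through their first-order optimality conditions and then convert a bound on the mismatch between these conditions into a bound on $\norm{u_k^* - \sps{u}{so}_k}$ via strong convexity. First, I would observe that since $\hat G \bar u + \bar z = G\bar u + H w_k$ by definition of $\bar z$, the objective in \eqref{opt:objectiveproblem_2} coincides with $\Psi(\bar u) := \E{w_k \smallsim \mc W_k}{\phi(\bar u, G\bar u + Hw_k)}$; hence $u_k^*$ is the unconstrained minimizer of $\Psi$ and satisfies $\nabla\Psi(u_k^*) = 0$, i.e. $\mathbb{E}[\nabla_u\phi(u_k^*,y^*) + G^\top \nabla_y\phi(u_k^*,y^*)] = 0$ with $y^* = Gu_k^* + Hw_k$. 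For the stable optimizer, by Definition~\ref{def:stableOptimizer} the distribution is frozen at $\mc Z_k(\sps{u}{so}_k)$; writing the stationarity condition of \eqref{eq:stableOptimizer} and using that $\hat G \sps{u}{so}_k + \bar z = G\sps{u}{so}_k + Hw_k =: y^{so}$ whenever $\bar z \sim \mc Z_k(\sps{u}{so}_k)$, I obtain $\mathbb{E}[\nabla_u\phi(\sps{u}{so}_k,y^{so}) + \hat G^\top \nabla_y\phi(\sps{u}{so}_k,y^{so})] = 0$.

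The key step is to evaluate the gradient of the \emph{true} objective $\Psi$ at the stable optimizer. Substituting the stable-optimizer condition $\mathbb{E}[\nabla_u\phi(\sps{u}{so}_k,y^{so})] = -\hat G^\top \mathbb{E}[\nabla_y\phi(\sps{u}{so}_k,y^{so})]$ into the expression $\nabla\Psi(\sps{u}{so}_k) = \mathbb{E}[\nabla_u\phi(\sps{u}{so}_k,y^{so}) + G^\top\nabla_y\phi(\sps{u}{so}_k,y^{so})]$ produces the exact cancellation
\begin{align*}
\nabla\Psi(\sps{u}{so}_k) = (G-\hat G)^\top \, \mathbb{E}[\nabla_y\phi(\sps{u}{so}_k, y^{so})].
\end{align*}
Because $y\mapsto\phi(u,y)$ is $\ell$-Lipschitz (Assumption~\ref{ass:lipschitz-convexity}(a)), $\norm{\nabla_y\phi}\le\ell$ pointwise, so $\norm{\nabla\Psi(\sps{u}{so}_k)} \le \ell\norm{G-\hat G}$. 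This is the crucial simplification: the decision-dependent shift enters the stationarity residual only through the factor $(G-\hat G)^\top$ acting on an output-gradient of magnitude at most $\ell$, with no contribution from the curvature of $\phi$.

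It then remains to convert this stationarity residual into a distance bound. Since $\nabla\Psi(u_k^*)=0$, a strong-convexity inequality $\kappa\,\norm{u_k^*-\sps{u}{so}_k} \le \norm{\nabla\Psi(\sps{u}{so}_k)-\nabla\Psi(u_k^*)}$ would give $\norm{u_k^*-\sps{u}{so}_k} \le \ell\norm{G-\hat G}/\kappa$. The required modulus $\kappa$ I would obtain by propagating the $\mu$-strong convexity of $\phi$ in its first argument through the output map: the control decision influences the cost through $\hat G\bar u$, and composing the strongly convex dependence with this linear map contributes a factor $\sbs{\sigma}{min}^2(\hat G)$, giving $\kappa = \tfrac12\mu\,\sbs{\sigma}{min}^2(\hat G)$, where the factor $\tfrac12$ absorbs the mismatch between the output arguments $y^{so}$ and $y^*$ (which differ by $(G-\hat G)(\sps{u}{so}_k-u_k^*)$). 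Combining this with the residual bound above yields the claimed inequality.

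The hard part will be precisely this last step. Assumption~\ref{ass:lipschitz-convexity}(c) furnishes strong convexity of $\phi$ only in $u$ for fixed $y$, so establishing that the reduced objective is strongly convex with modulus $\tfrac12\mu\,\sbs{\sigma}{min}^2(\hat G)$ requires carefully accounting for the composition with $\hat G$ and controlling the cross-curvature and argument-mismatch terms through the gradient-Lipschitz constants of Assumption~\ref{ass:lipschitz-convexity}(b). I would isolate this as a self-contained strong-convexity lemma for the composite map $\bar u\mapsto\E{w_k\smallsim\mc W_k}{\phi(\bar u,\hat G\bar u + \bar z)}$ and then splice it together with the clean residual identity derived above.
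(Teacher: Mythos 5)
Your opening moves are correct, and the residual identity is genuinely elegant: since $\hat G \bar u + \bar z = G\bar u + H w_k$, the stable-optimizer stationarity condition does give, exactly, $\nabla\Psi(\sps{u}{so}_k) = (G-\hat G)^\tsp\,\mathbb{E}[\nabla_y \phi(\sps{u}{so}_k, y^{\textup{so}}_k)]$ with $y^{\textup{so}}_k := G\sps{u}{so}_k + Hw_k$, and Assumption~\ref{ass:lipschitz-convexity}(a) bounds this by $\ell\norm{G-\hat G}$; your arithmetic would even reproduce the constant in \eqref{eq:errorSouStar} if the modulus $\kappa = \tfrac12\mu\,\sbs{\sigma}{min}^2(\hat G)$ held. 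But that modulus claim is the genuine gap, and it is not repairable as stated. Your conversion step requires strong convexity of the \emph{true} reduced objective $\Psi(\bar u)=\E{w_k \smallsim \mc W_k}{\phi(\bar u, G\bar u + Hw_k)}$, in which the decision variable enters through the composition with the \emph{unknown} map $G$, not $\hat G$. Any propagation of the strong convexity of $\phi$ through a linear output map yields a modulus governed by $\sbs{\sigma}{min}(G)$, and since $\sbs{\sigma}{min}(G)\geq \sbs{\sigma}{min}(\hat G) - \norm{G-\hat G}$ can vanish once the estimation error is comparable to $\sbs{\sigma}{min}(\hat G)$, the factor $\tfrac12$ cannot ``absorb the mismatch'': it would require a smallness condition of the type $\norm{G-\hat G}\leq (1-1/\sqrt{2})\,\sbs{\sigma}{min}(\hat G)$, which Proposition~\ref{prop:errorAtEquilibrium} does not assume. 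In the large-error regime $\Psi$ need not be strongly convex at all, so your inequality chain breaks exactly where the bound (whose right-hand side is then merely large, but still must be proved) is nontrivial.

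The natural repair within your first-order framework --- freeze the distribution and apply strong convexity to $g(\bar u):=\E{\mc{Z}_k(\sps{u}{so}_k)}{\phi(\bar u, \hat G\bar u + \bar z)}$, where the composition really is through $\hat G$ and the modulus $\mu\,\sbs{\sigma}{min}^2(\hat G)$ is available --- does not save the argument either: evaluating $\nabla g(u_k^*)$ shifts the output argument by $(G-\hat G)(\sps{u}{so}_k - u_k^*)$, and Assumption~\ref{ass:lipschitz-convexity}(b) then produces a cross term of order $(\ell_u^\nabla + \norm{\hat G}\ell_y^\nabla)\norm{G-\hat G}\norm{u_k^* - \sps{u}{so}_k}$, so one only recovers a bound under the extra condition $(\ell_u^\nabla + \norm{\hat G}\ell_y^\nabla)\norm{G-\hat G} < \mu\,\sbs{\sigma}{min}^2(\hat G)$. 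This is precisely why the paper argues at the level of function values rather than gradients: it starts from $\mathbb{E}_{\mc Z(u_k^*)}f(u_k^*,z) \leq \mathbb{E}_{\mc Z(\sps{u}{so}_k)}f(\sps{u}{so}_k,z)$, controls the induced distribution shift via the Wasserstein estimate $W_1(\mc Z_k(u),\mc Z_k(u'))\leq \norm{G-\hat G}\norm{u-u'}$ together with Lemma~\ref{lem:expectationDeviation} (Kantorovich--Rubinstein), and lower-bounds the frozen objective using strong convexity through $\hat G$ plus the variational inequality at $\sps{u}{so}_k$. The zeroth-order comparison is what delivers the factor $2$ with no smallness condition on $\norm{G-\hat G}$; a variational argument in the spirit of yours inevitably trades that away, so you should either switch to the function-value route or accept a weaker, conditionally valid statement.
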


\begin{proof}
The proof idea is similar to \cite[Theorem 4.3]{JP-TZ-CM-MH:21}.
By recalling that  $\bar z = (G - \hat G) \bar u + w_k$, a direct 
application of Theorem \ref{thm:KR} yields:
\begin{align}
\label{eq:auxW1}
W_1(\mc{Z}_k(u), \mc{Z}_k(u')) 
\leq 
\norm{G - \hat G} \norm{u-u'},
\end{align}
for any $u, u' \in \real^m.$
Next, we denote in compact form 
$f(u, z) := \phi(u, \hat G  u  +  z)$.
By recalling the definition of $u_k^*$ and of $\sps{u}{so}_k$, we have 
that
$\mathbb{E}_{\mc{Z}(u^*_k)} f(u^*_k,z) \leq 
\mathbb{E}_{\mc{Z}(\sps{u}{so}_k)} f(\sps{u}{so}_k,z)$, which
implies:
\begin{align}
\label{eq:auxExpectationBound}
\E{\mc{Z}(\sps{u}{so}_k)}{f(u^*_k,z)}
&- \E{\mc{Z}(\sps{u}{so}_k)}{f(\sps{u}{so}_k,z)}\\
& \quad \quad \leq 
\E{\mc{Z}(\sps{u}{so}_k)}{f(u^*_k,z)}
- \E{\mc{Z}(u^*_k)}{f(u^*_k)}. \nonumber
\end{align}
First, we upper bound the right hand side of 
\eqref{eq:auxExpectationBound}. To this aim, by combining 
\eqref{eq:auxW1} with Assumption \ref{ass:lipschitz-convexity}(a)
and by application of Lemma \ref{lem:expectationDeviation} we have:
\begin{align}
\label{eq:upperRight}
\E{\mc{Z}(\sps{u}{so}_k)}{f(u^*_k,z)}
- \E{\mc{Z}(u^*_k)}{f(u^*_k,z)} 
\leq 
\ell \norm{G - \hat G} \norm{u^*_k - \sps{u}{so}_k}.
\end{align}
Second, we lower bound the left hand side of 
\eqref{eq:auxExpectationBound}. To this aim, we note that Assumption 
\ref{ass:lipschitz-convexity}(c) implies:
$f(u^*_k,z) \geq f(\sps{u}{so}_k,z) 
+ \nabla_u f(\sps{u}{so}_k,z)^\tsp (u^*_k - \sps{u}{so}_k) 
+ \frac{\mu \sbs{\sigma}{min}^2(\hat G)}{2} \norm{u^*_k-\sps{u}{so}_k}^2$
for all $z$.
Moreover, since $\sps{u}{so}_k$ is a stable optimizer, it satisfies the 
following variational inequality: 
$\mathbb{E}_{\mc{Z}(\sps{u}{so}_k)}{[\nabla_u f(\sps{u}{so}_k,z)^\tsp  
(u'-\sps{u}{so}_k)]} \geq 0$ for all $u' \in \real^m$.
By combining the above two conditions we obtain:
\begin{align}
\label{eq:lowerLeftBound}
\E{\mc{Z}(\sps{u}{so}_k)}{f(u^*_k,z)}
&- \E{\mc{Z}(\sps{u}{so}_k)}{f(\sps{u}{so}_k,z)} \geq
\frac{\mu \sbs{\sigma}{min}^2(\hat G)}{2} \norm{\sps{u}{so}_k- u^*_k}^2.
\end{align}
Finally, the claim follows by combining \eqref{eq:upperRight} with 
\eqref{eq:lowerLeftBound}.
\end{proof}

Proposition \ref{prop:errorAtEquilibrium} quantifies the error between
a stable optimizer and the 
optimizer of~\eqref{opt:objectiveproblem_2}.
The bound shows that the error grows linearly with the absolute 
error $\norm{G-\hat G}$ and with the Lipschitz continuity constant 
$\ell$, and is inversely proportional to the smallest singular value 
of $\hat G$ and to the strong-convexity constant $\mu$. Notice that, 
in this case, the singular value $\sbs{\sigma}{min}^2(\hat G)$ could 
be interpreted as a conditioning number for the strong convexity 
constant $\mu$.
Finally, we note that when $G$ is known exactly (i.e., $\hat G = G$), 
then $u_k^* = \sps{u}{so}_k$. Indeed, in this case the optimization 
problem~\eqref{opt:objectiveproblem_2} does not feature distributions 
that are decision-dependent, rather, it is a stochastic problem with  
time-varying and unknown distributions.

\subsection{Controller Synthesis Technique}
To synthesize a controller, we note that under two simplifying 
assumptions: (i) the distribution $\mc W_k$ of the disturbance $w_k$ 
is known at all times, and (ii) the dynamics of \eqref{eq:plantModel} 
are infinitely fast (i.e., $y_k =G u_k+H w_k$ holds for all 
$k \in \integpos$), then~\eqref{opt:objectiveproblem_2} simplifies to 
stochastic optimization problem.
Thus, standard optimization methods~\cite{AB-ED-AS:19} advocate for the
adoption of the following discrete update to seek a solution 
of~\eqref{opt:objectiveproblem_2}:
\begin{align}
\label{eq:gradientSteadyStateMap}
u_{k+1} = u_k - 
&\eta \mb{E} [\nabla_u \phi(u_k, G u_k + H w_k) \\
&\quad\quad\quad\quad\quad + \hat G^\tsp \nabla_y \phi(u_k,G u_k + H w_k)],
\nonumber
\end{align}
where $\eta\in \real_{>0}$ is a tunable controller gain.
However, the update~\eqref{eq:gradientSteadyStateMap}
suffers from the following two main limitations: 
(i) the update requires evaluations of the gradient functions at 
the points $G u_k + H w_k$, which are unavailable when $G, H$, and 
$w_k$ are unknown, and (ii) computing the expectation 
in~\eqref{eq:gradientSteadyStateMap} requires full knowledge of the 
distributions $\mc W_k$ for all $k \in \integpos$.
In order to overcome limitation (i), we replace the steady-state map 
$G u_k + H w_k$ with instantaneous samples of the output $y_k$ (thus 
making the algorithm \textit{online} \cite{EH:16}) and, to cope with 
limitation (ii), we replace exact gradient evaluations with samples
collected at the current time step (thus making the algorithm 
\textit{stochastic}), and we propose the following stochastic 
gradient-descent controller for \eqref{eq:plantModel}:
\begin{align}
\label{eq:gradientFlowController}
x_{k+1} &=  A x_k + B u_k + E w_k, ~~ y_k = C x_k + D w_k,\nonumber\\
u_{k+1} &= u_k - \eta 
(\nabla_u \phi(u_k, y_k) + \hat G^\tsp \nabla_y \phi(u_k,y_k)). 
\end{align}
With respect to \eqref{eq:gradientSteadyStateMap}, the updates
\eqref{eq:gradientFlowController} includes two fundamentally-new 
features: it accounts for an ``approximate'' $y_k$ that is 
the output of a system with non-negligible dynamics, and it describes a 
stochastic-gradient update, where the true gradient is replaced 
by its noisy versions obtained by sampling.

In the remainder of this paper, we focus on proving 
that a suitable choice of the controller gain $\eta$ guarantees 
convergence of~\eqref{eq:gradientFlowController} to a stable optimizer.
To this aim, the dynamics \eqref{eq:gradientFlowController} first need 
to be fully specified by characterizing the matrix $\hat G$, which 
is the focus of our next section.


\begin{remark}{\bf \textit{(Extensions to Constrained Optimization Problems and Time-Varying Cost Functions)}}
\label{rem:projections}
The proposed framework can be extended to more general settings. First, when the optimization problem \eqref{opt:objectiveproblem} includes 
convex constrains of the form $u \in \mc U$, where 
$\mc U \subseteq \real^m$ is a closed and convex set, then the 
controller \eqref{eq:gradientFlowController} can be modified to account 
for constraints as follows:
\begin{align}
\label{eq:projectedGradient}
u_{k+1} &= \Pi_{\mc U} (u_k - \eta 
(\nabla_u \phi(u_k, y_k) + \hat G^\tsp \nabla_y \phi(u_k,y_k))),
\end{align}
where $\map{\Pi_{\mc U}}{\real^m}{\mc U}$ denotes the orthogonal 
projection onto $\mc U$, namely, for any $z \in \real^m$
\begin{align*}
\Pi_{\mc U}(z) := \arg \min_{u \in \mc U} \norm{u-z}.
\end{align*}
In this case, by using the non-expansiveness property of the projection
operator~\cite{SB-LV:04}, all the conclusions drawn in the remainder of
this paper hold unchanged. 
Second, when the cost function of \eqref{opt:objectiveproblem} is 
time-varying, that is, $(u, y) \mapsto \phi(u,y)$ is generalized by 
$(u, y) \mapsto \phi_k(u,y)$ in \eqref{opt:objectiveproblem}, then all 
the results derived in the remainder of this paper also hold unchanged,
provided that Assumption~\ref{ass:lipschitz-convexity} holds uniformly 
in time. Examples include quadratic functions of the form 
$\phi_k(u,y) = \frac{1}{2} u^\top Q u + \frac{1}{2}\|y - \sps{y}{ref}_k\|^2$, where $Q \succ 0$ and 
$k \mapsto \sps{y}{ref}_k \in ]real^p$ is a given reference point for 
the system output at time $k$. 
\QEDB
\end{remark}

\section{Data-Driven Computation of the Transfer Function of Linear Systems}
\label{sec:5}

In this section, we tackle challenge (Ch3). To this end, we first show 
that the transfer function $G$ can be computed exactly by using  
input-output data generated from \eqref{eq:plantModel}, provided that 
disturbance terms affecting the sample data are known.
In the second part of the section, we relax the assumption that noise 
terms are known, and we devise a technique to approximate~$G$.

\subsection{Exact Computation of the Transfer Function}
We begin by assuming the availability of a set of historical data 
$y_{[0,T]}$ generated by \eqref{eq:plantModel} when 
$u_{[0,T]}$, $w_{[0,T]}$ are applied as inputs.
In order to state our result, we define:
\begin{align}
\label{eq:diffSignals}
\sps{y}{diff} &:= (y_1- y_0, ~ y_2-y_1,\dots,~ y_{T}-y_{T-1}),\nonumber\\
\sps{w}{diff} &:= (w_1- w_0, ~ w_2-w_1,\dots,~ w_{T}-w_{T-1}),
\end{align}
and we let $\sps{Y}{diff}_{\nu,q}$ and $\sps{W}{diff}_{\nu,q}$, 
respectively, be the associated Hankel matrices.
The following result provides a data-driven method to compute the map 
$G$ via algebraic operations.

\smallskip
\begin{theorem}{\bf \textit{(Data-Driven Characterization of 
Steady-State Transfer Function)}}
\label{thm:data-drivenTransferFunction}
Let Assumption \ref{ass:stabilityPlant} be satisfied and let 
$\nu \in \integ_{>0}$ denote the observability index of 
\eqref{eq:plantModel}.
Moreover, assume $u_{[0,T-1]}$ and $w_{[0,T-1]}$ are persistently 
exciting signals of order $n + \nu$, and let $q := T-\nu+1$.
The following holds:

\begin{itemize}
\item[(i)] There exists $M \in \real^{q \times m \nu}$ such that:
\begin{align}
\label{eq:definionMatrixM}
\sps{Y}{diff}_{\nu,q}M &= 0, &
\sps{W}{diff}_{\nu,q}M &= 0, \nonumber  \\
U_{\nu,q}M &= \one_\nu \otimes I_m, &
W_{\nu,q}M&=0,
\end{align}
where $\sps{Y}{diff}_{\nu,q}$ and $\sps{W}{diff}_{\nu,q}$ are defined 
in~\eqref{eq:diffSignals}.

\item[(ii)] For any $M \in \real^{q \times m \nu}$ that satisfies \eqref{eq:definionMatrixM}, 
the steady-state transfer function of \eqref{eq:plantModel}
equals $G=  [Y_{\nu,q}]_i M$, for any $i \in \until \nu$.
\hfill  \QEDB
\end{itemize}
\end{theorem}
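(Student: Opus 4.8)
The plan is to strip away the stochasticity and work with the \emph{deterministic} LTI system obtained by treating the disturbance as an auxiliary input: writing $\tilde u_k := (u_k,w_k)$, the plant \eqref{eq:plantModel} becomes $x_{k+1} = A x_k + [B\;E]\tilde u_k$, $y_k = C x_k + [0\;D]\tilde u_k$, an honest input--output system to which the behavioral machinery of Section~\ref{sec:2} applies verbatim. Since $(A,B)$ is controllable (Assumption~\ref{ass:stabilityPlant}), the augmented pair $(A,[B\;E])$ is controllable as well, and the persistency of excitation of order $n+\nu$ of $(u,w)$ licenses Lemma~\ref{lem:fundLemmaExistenceg} with $L=\nu$. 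The one structural fact I will use throughout is superposition: each column of the stacked data Hankel matrix is a genuine length-$\nu$ trajectory, so every linear combination of columns is again a trajectory, generated by the same combination of the underlying initial states.

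For part~(i) I would exhibit $M$ directly, one column per basis input direction $e_j$, $j\in\until{m}$. Holding $u\equiv e_j$ and $w\equiv 0$ drives \eqref{eq:plantModel} to the equilibrium $\sps{x}{ss}=(I-A)^\inv B e_j$ (well defined because $A$ is Schur), with constant output $G e_j$; this is a valid trajectory whose input is constant $e_j$, whose disturbance is zero, and whose output is constant. Lemma~\ref{lem:fundLemmaExistenceg} then returns a coefficient vector $\alpha_j$ reproducing it from the recorded data, and stacking the $\alpha_j$ yields $U_{\nu,q}M=\one_\nu\otimes I_m$ and $W_{\nu,q}M=0$ at once, while $\sps{Y}{diff}_{\nu,q}M=0$ and $\sps{W}{diff}_{\nu,q}M=0$ hold because the consecutive differences of a constant signal vanish.

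Part~(ii) is the crux, and I would run the implication in reverse using only superposition, so that no rank or nondegeneracy property of $M$ is needed. Fix any $M$ satisfying \eqref{eq:definionMatrixM} and the column $\alpha$ associated with direction $e_j$: by superposition it generates a length-$\nu$ trajectory with input $\equiv e_j$ (from $U_{\nu,q}\alpha$), disturbance $\equiv 0$ (from $W_{\nu,q}\alpha=0$ together with $\sps{W}{diff}_{\nu,q}\alpha=0$), and constant output (from $\sps{Y}{diff}_{\nu,q}\alpha=0$). The decisive step is to upgrade ``constant output'' to ``at equilibrium.'' For this I pass to the \emph{differenced} trajectory $k\mapsto(x_{k+1}-x_k,\,y_{k+1}-y_k)$, which is an autonomous trajectory of the pair $(A,C)$ whose output is zero across a window of length $\nu$; hence $\mc O_\nu(x_1-x_0)=0$, and observability ($\rank\mc O_\nu=n$) forces $x_1-x_0=0$, so the state is frozen along the window. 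A constant state under input $e_j$ and zero disturbance must equal $\sps{x}{ss}=(I-A)^\inv B e_j$, whence the output equals $C\sps{x}{ss}=G e_j$. Reading this off every block-row and collecting the directions $j$ gives $[Y_{\nu,q}]_i M=G$ for all $i\in\until{\nu}$.

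The part I expect to be most delicate is the index bookkeeping that ties the differenced Hankel matrices to the original ones. Since $\sps{Y}{diff}_{\nu,q}=\Delta\,Y_{\nu+1,q}$, with $\Delta$ the first-difference operator, the four conditions secretly constrain length-$(\nu+1)$ windows even though persistency of excitation is posited only of order $n+\nu$; the existence claim in part~(i) therefore requires checking that the choice $q=T-\nu+1$ keeps every matrix inside the recorded horizon $y_{[0,T]}$ and that the assumed excitation is still enough to reproduce the equilibrium witness $\alpha_j$. In part~(ii) the corresponding point is merely that the $\nu$ block-rows of $\sps{Y}{diff}_{\nu,q}\alpha=0$ deliver exactly the relations $C(x_1-x_0)=\dots=CA^{\nu-1}(x_1-x_0)=0$ that observability converts into $x_1-x_0=0$. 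Once these alignments are settled, the remainder reduces to the equilibrium identity $C(I-A)^\inv B=G$ and to the linearity of the map from coefficient vectors to trajectories.
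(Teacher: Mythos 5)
Your proposal is correct and takes essentially the same route as the paper's own proof: part (i) is the paper's construction verbatim (reproduce the constant equilibrium trajectory $u \equiv e_j$, $w \equiv \zero_r$, $y \equiv G e_j$ via the fundamental lemma applied to the stacked input $(u,w)$, with constancy of $y$ and $w$ yielding the two difference conditions), and part (ii) matches the paper's Step 1--Step 2 argument, since your ``differenced autonomous $(A,C)$-trajectory with zero output'' giving $\mc O_\nu (x_1 - x_0) = 0$ is precisely the paper's chain $C A^{k}\bigl[(A-I)\bar x_{0,j} + B e_j\bigr] = 0$ for $k = 0,\dots,\nu-1$, followed by observability to freeze the state and invertibility of $I-A$ (Schur stability) to identify it with $(I-A)^\inv B e_j$. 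The index subtlety you flag -- the difference conditions reaching one sample beyond the length-$\nu$ window, which is exactly why both $W_{\nu,q}M = 0$ and $\sps{W}{diff}_{\nu,q}M = 0$ are imposed -- is handled at the same level of detail in the paper itself, so your treatment is fully consistent with it.
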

\smallskip

\begin{proof}
\textit{(Proof of (i))}. Fix a $j \in \until{m}$, let
$\bar u = (e_j, e_j, \dots ) \in\real^{m\nu}$,
where $e_j \in \real^m$ denotes the $j$-th canonical 
vector, let $\bar w = (\zero_r, \zero_r, \dots ) \in\real^{r\nu}$, and 
let $\bar y = (G e_j, G e_j, \dots) \in\real^{p\nu}$.
Since $(\bar u, \bar w, \bar y)$ is an input-output trajectory of 
\eqref{eq:plantModel}, Lemma \ref{lem:fundLemmarankHankelMatrix} 
guarantees the existence of $m_j \in \real^q$ such that 
$U_{\nu,q} m_j=\bar u$, 
$W_{\nu,q} m_j=\bar w$, and $Y_{\nu,q} m_j=\bar y$. 
By iterating the above reasoning for all $j \in \until{m}$,
and by letting the $j$-th column of M be $m_j$, we obtain that
$U_{\nu,q}M = \one_\theta \otimes I_m$ and 
$W_{\nu,q}M=0$.
Moreover, since $\bar y$, and $\bar w$ are constant at all times
we conclude that $\sps{Y}{diff}_{\nu,q} M = 0$ and 
$\sps{W}{diff}_{\nu,q} M = 0$, which proves existence of $M$.

\textit{(Proof of (ii))}.
The proof of this claim builds upon the following observation.
Let $\bar U := I_m$, let
$\bar W := \zero_{r\times m}$, let
$\bar X := (I_n - A)^\inv B \bar U +(I_n-A)^\inv E \bar W$, and let
$\bar Y:= C \bar X + D \bar W$. Then, by substitution, $\bar Y$ 
satisfies:
\begin{align}
\label{eq:auxOutputMatrix}
\bar Y = G \bar U + H \bar W = G.
\end{align}
In words, this implies that, when the inputs $\bar U = I_m$ and
$\bar W = \zero_{r\times m}$ are applied to \eqref{eq:plantModel} and 
the state satisfies
$\bar X = (I_n - A)^\inv B \bar U +(I_n-A)^\inv E \bar W$, then the 
system output satisfies $\bar Y = G$, namely, it coincides with the 
steady-state transfer function $G$.

Building upon this observation, in what follows we show that
\eqref{eq:definionMatrixM} and \eqref{eq:auxOutputMatrix} are 
equivalent, in the sense described by 
Lemma~\ref{lem:fundLemmaExistenceg}.
Formally, let $M$ be any matrix that satisfies 
\eqref{eq:definionMatrixM}. By application of 
Lemma~\ref{lem:fundLemmaExistenceg},
$U_{\nu,q}M = \one_\nu \otimes I_m$ implies that the input applied to 
\eqref{eq:plantModel} is $\bar U = I_m$, and $W_{\nu,q}M=0$ implies 
that the exogenous disturbance applied to \eqref{eq:plantModel} is 
$\bar W := \zero_{r\times m}$.
Next, we show that the matrix  $\bar Y$ 
defined as $\bar Y = [Y_{\nu,q}]_i M$ for any  
$i \in \until \nu$ coincides with \eqref{eq:auxOutputMatrix}, namely, 
we will~show:
\begin{align}
\label{eq:proofYbarXbar}
\bar Y = [Y_{\nu,q}]_i M \quad \Rightarrow  \quad
\bar Y&= C \bar X,
\bar X = (I_n - A)^\inv B I_m.
\end{align}
To this aim, we let $\bar y_{ij} = [Y_{\nu,q}]_i m_j$ denote the 
$j$-th column of $\bar Y$, and we define 
$\bar x_{ij} := [X_{\nu,q}]_i m_j$. 
Notice that $\bar Y= C \bar X$ follows from 
$[Y_{\nu,q}]_i = C [X_{\nu,q}]_i$. 
Thus, we next show that $\bar X = (I_n - A)^\inv B I_m$. 
The proof is organized into two steps.

\noindent
\textit{(Step 1) Prove that $\bar y_{i,j} = \bar y_{i+1,j}$.}
By using $\sps{Y}{diff}_{\nu,q}M = 0$:
\begin{align}
\label{eq:proofYdiff=0}
0 &= 
[\sps{Y}{diff}_{\nu,q}]_i m_j \nonumber\\ 
&=
C
\begin{bmatrix}
(A-I_n) & B & E
\end{bmatrix}
\begin{bmatrix}
[X_{\nu,q}]_i\\ 
[U_{\nu,q}]_i\\ 
[W_{\nu,q}]_i
\end{bmatrix}
m_j 
+ D [\sps{W}{diff}_{\nu,q}]_i m_j
\nonumber \\
&= C(A-I_n) \bar x_{ij} + CB e_j,
\end{align}
where the last inequality follows from 
$\bar x_{ij} := [X_{\nu,q}]_i m_j$, 
$U_{\nu,q}M = \one_\nu \otimes I_m$,
$W_{\nu,q}M = 0$, and
$\sps{W}{diff}_{\nu,q} M = 0$.
Hence, we conclude that
\begin{align}
\label{eq:proof_yij}
\bar y_{ij} = C \bar x_{ij} = CA \bar x_{ij} + CB e_j.
\end{align}
Moreover, since \eqref{eq:proofYdiff=0} holds for all 
$i \in \until \nu$, Lemma \ref{lem:fundLemmaExistenceg} guarantees 
that $\bar u = (e_j,\dots,e_j) \in \real^{m \nu}$,
$\bar w = (\zero_r,\dots,\zero_r) \in \real^{r \nu}$,
$\bar x= (\bar x_{1j}, \dots , \bar x_{\nu j}) \in \real^{n \nu}$, and
$\bar y = (\bar y_{1j}, \dots , \bar y_{\nu j}) \in \real^{p \nu}$ is 
an input-state-output trajectory of the system \eqref{eq:plantModel}, 
and thus it satisfies the dynamics:
\begin{align}
\label{eq:proof_xij}
\bar x_{i+1, j} &= A\bar x_{ij} + B e_j, & 
\bar y_{ij} &= C \bar x_{ij}.
\end{align}
By combining \eqref{eq:proof_yij} with \eqref{eq:proof_xij} we 
conclude that:
\begin{align*}
\bar y_{ij} &= CA \bar x_{ij} + CB e_j = C(A \bar x_{ij} + Be_j) 
= C \bar x_{i+1,j} = \bar y_{i+1,j}.
\end{align*}

\noindent
\textit{(Step 2) Prove that 
$\bar x_{i,j} = (I- A)^\inv B e_j$.}
By combining $\bar y_{i,j} = \bar y_{i+1,j}$ with the dynamics
\eqref{eq:proof_xij} we obtain:
\begin{align*}
C A^k \begin{bmatrix}
A-I_n & B
\end{bmatrix}
\begin{bmatrix}
\bar x_{0,j} \\ e_j
\end{bmatrix}
= 0, \text{ for all } k \in \until{\nu-1}.
\end{align*}
By recalling that \eqref{eq:plantModel} is Observable 
(see Assumption \ref{ass:stabilityPlant}), the above identity implies
$(A-I) \bar x_{0,j} + B e_j = 0$ or, equivalently, 
$\bar x_{0,j} = (I-A)^\inv B e_j$.
By recalling that 
$\bar x= (\bar x_{1j}, \dots , \bar x_{\nu j}) \in \real^{n \nu}$
represents the state associated with the constant input sequences 
$\bar u = (e_j,\dots,e_j) \in \real^{m \nu}$ and 
$\bar w = (\zero_r,\dots,\zero_r) \in \real^{r \nu}$ 
(see \eqref{eq:proof_xij}), we obtain $x_{i+1,j} = x_{i,j}$ for
all $i \in \until{\nu-1}$, which implies that 
$\bar x_{i,j} = (I-A)^\inv B e_j$ holds for all $i$, thus proving  
\textit{Step 2}.
Finally, $\bar X = (I_n - A)^\inv B I_m$ follows by iterating the 
above reasoning for all $j \in \until{ m \nu}$.
\end{proof}

Theorem~\ref{thm:data-drivenTransferFunction} shows that $G$ can be 
computed from (non steady-state) sample data generated by the 
open-loop system \eqref{eq:plantModel}, and without 
knowledge of the matrices $(A,B,C)$. 
With reference to sample complexity, the result suggests that the 
length of the sample trajectory needed to compute $G$ grows linearly 
with the observability index $\nu$.
Two technical observations are in order. 
First, in general, any matrix $M$ chosen according to 
\eqref{eq:definionMatrixM} depends on the realization of $w_{[0,T-1]}$ 
and on the choice of the input $u_{[0,T-1]}$. 
Second, for any fixed $u_{[0,T-1]}$ and  $w_{[0,T-1]}$, in general, 
there exists an infinite number of choices of $M$ that satisfy 
\eqref{eq:definionMatrixM}.
Despite $M$ not being unique, 
Theorem~\ref{thm:data-drivenTransferFunction} guarantees that 
$[Y_{\nu,q}]_i M$ is unique and independent of the choice of 
$u_{[0,T-1]}$ and $w_{[0,T-1]}$.

\smallskip

\begin{remark}{\bf \textit{(Sample Complexity)}}
\label{rem:numberOfSamplesTheorem}
Theorem~\ref{thm:data-drivenTransferFunction} requires
persistence of excitation of the $T$-long signals $u_{[0,T-1]}$ and 
$w_{[0,T-1]}$. In addition, constructing the difference 
signals $\sps{y}{diff}$ and $\sps{w}{diff}$ requires the collection of 
one additional sample of the signals $y_{[0,T]}$ and $w_{[0,T]}$
(i.e., $T+1$ samples).
\QEDB
\end{remark}
\smallskip

In Theorem \ref{thm:data-drivenTransferFunction} we assume full 
knowledge of the disturbance terms $w_{[0,T-1]}$, affecting the 
training data. 
Next, we show that in the special case where $w_{[0,T-1]}$ is unknown 
but constant at all times, 
Theorem~\ref{thm:data-drivenTransferFunction} can still be used to 
determine the input-to-output map of \eqref{eq:plantModel}.
To this aim, for all $k \in \integpos$, define:
\begin{align}
\label{eq:systemDifferences}
d_k := x_{k+1}-x_k, 
r_k := y_{k+1}-y_k,
v_k := u_{k+1}-u_k.
\end{align}
By using \eqref{eq:plantModel}, the new variables follow 
the dynamical update: 
\begin{align}
\label{eq:update-d}
d_{k+1} &= A d_k + B v_k,& r_k &= C d_k.
\end{align}
The above observation is formalized next.

\smallskip
\begin{corollary}{\bf \textit{(Data-Driven Characterization of 
Steady-State Transfer Function with Constant Noise)}}
\label{cor:constantNoiseG}
Let Assumption \ref{ass:stabilityPlant} be satisfied and let 
$\nu \in \integ_{>0}$ denote the observability index of 
\eqref{eq:plantModel}.
Moreover, assume $u_{[0,T]}$ is a  persistently exciting signals of 
order $n + \nu$, and let $q := T-\nu+1$. 
If $w_k=w \in \real^r$ for all $k \in \{0, \dots, T\}$, then the 
steady-state transfer function of \eqref{eq:plantModel} equals 
$G=  [R_{\nu,q}]_i M$, for any $i \in \until \nu$, where
\begin{align}
\label{eq:definionMatrixM-noisy}
\sps{R}{diff}_{\nu,q}M=0, \qquad
V_{\nu,q}M &= \one_\nu \otimes I_m,
\end{align}
and $R_{\nu,q}$, $V_{\nu,q}$ are the Hankel matrices associated with 
the signals in \eqref{eq:systemDifferences}, and 
$\sps{R}{diff}_{\nu,q}$ is the Hankel matrix associated with 
$ [r_1- r_0, ~ r_2-r_1,~ \dots ~ r_{T}-r_{T-1}]$. 
\QEDB
\end{corollary}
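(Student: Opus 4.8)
The plan is to remove the unknown constant disturbance by differencing, thereby reducing the claim to the already-established Theorem~\ref{thm:data-drivenTransferFunction} applied to a \emph{disturbance-free} system. Concretely, I would start from \eqref{eq:plantModel} with $w_k=w$ for all $k$ and form the difference signals in \eqref{eq:systemDifferences}. Since $w_{k+1}-w_k=0$, the terms $E(w_{k+1}-w_k)$ and $D(w_{k+1}-w_k)$ both vanish, so $(d_k,v_k,r_k)$ obey the dynamics \eqref{eq:update-d}, i.e. $d_{k+1}=Ad_k+Bv_k$ and $r_k=Cd_k$. This is a system of the form \eqref{eq:auxDynamicalSystem} driven by the \emph{same} triple $(A,B,C)$, with zero feedthrough and no exogenous input; it therefore inherits controllability and observability from \eqref{eq:plantModel} and has the same observability index $\nu$. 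Moreover its steady-state map is unchanged: at equilibrium $d=(I-A)^\inv B v$, whence $r=Cd=C(I-A)^\inv B v=Gv$. Thus differencing annihilates the constant offset produced by $w$ while preserving the transfer function $G$.

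With this reduction in place, I would invoke Theorem~\ref{thm:data-drivenTransferFunction} on \eqref{eq:update-d}, reading $v_{[0,T-1]}$ as the input, $r_{[0,T-1]}$ as the output, and taking the exogenous disturbance identically zero. Because no disturbance is present, the two conditions in \eqref{eq:definionMatrixM} that involve the Hankel matrices of $w$ and of $\sps{w}{diff}$ are vacuous, and the remaining two conditions collapse to $\sps{R}{diff}_{\nu,q}M=0$ and $V_{\nu,q}M=\one_\nu\otimes I_m$, which are exactly \eqref{eq:definionMatrixM-noisy}. The output identity $G=[Y_{\nu,q}]_iM$ furnished by part~(ii) of Theorem~\ref{thm:data-drivenTransferFunction} then becomes $G=[R_{\nu,q}]_iM$ for any $i\in\until\nu$, which is precisely the assertion of the corollary. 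Hence, once the hypotheses of Theorem~\ref{thm:data-drivenTransferFunction} are verified for \eqref{eq:update-d}, the result follows by direct specialization.

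The step I expect to be the main obstacle is verifying persistency of excitation for the \emph{differenced} input: Theorem~\ref{thm:data-drivenTransferFunction} requires $v_{[0,T-1]}$ to be persistently exciting of order $n+\nu$, whereas the standing hypothesis concerns $u_{[0,T]}$. Differencing need not preserve the order of excitation — for instance, an input that is a polynomial sequence of degree $n+\nu-1$ is persistently exciting of order exactly $n+\nu$, yet its first difference has degree $n+\nu-2$ and is persistently exciting of order only $n+\nu-1$. To bridge this gap I would use the factorization $V_{n+\nu,q'}=\Delta\,U_{n+\nu+1,q'}$ (for a common column count $q'$), where $\Delta\in\real^{m(n+\nu)\times m(n+\nu+1)}$ is the full-row-rank block-bidiagonal differencing operator with blocks $-I_m$ and $I_m$; since $\Delta$ is surjective, $V_{n+\nu,q'}$ attains full row rank $m(n+\nu)$ as soon as $U_{n+\nu+1,q'}$ does. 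The technical crux is therefore to secure this full-rank property for the depth-$(n+\nu+1)$ Hankel matrix of $u$ from the persistency of the collected sequence $u_{[0,T]}$ — whose extra sample is exactly what makes the depth-$(n+\nu+1)$ windows available — after which the persistency of $v$ follows and all remaining computations are the routine specialization of Theorem~\ref{thm:data-drivenTransferFunction} described above.
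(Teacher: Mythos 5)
Your overall route is exactly the paper's: difference the data to annihilate the constant disturbance, observe that $(d_k,v_k,r_k)$ obey \eqref{eq:update-d} with the same triple $(A,B,C)$ (hence the same observability index $\nu$ and the same steady-state map $G=C(I-A)^\inv B$), and then specialize Theorem~\ref{thm:data-drivenTransferFunction} with identically zero disturbance, so that the conditions on $W_{\nu,q}$ and $\sps{W}{diff}_{\nu,q}$ in \eqref{eq:definionMatrixM} become vacuous and \eqref{eq:definionMatrixM} collapses to \eqref{eq:definionMatrixM-noisy}. All of that matches the paper's proof of the corollary.

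The gap is precisely the step you flagged, and it does not close under the stated hypotheses. Your factorization $V_{n+\nu,q'}=\Delta\,U_{n+\nu+1,q'}$ with $\Delta$ of full row rank is correct, but it needs $U_{n+\nu+1,q'}$ to have full row rank, i.e., $u$ persistently exciting of order $n+\nu+1$ --- strictly stronger than the corollary's hypothesis of order $n+\nu$. The single extra sample in $u_{[0,T]}$ only adds a column at the same Hankel depth (equivalently, makes the depth-$(n+\nu+1)$ windows available); it cannot raise the excitation order, and your own polynomial example, which works at every horizon $T$, shows this: for $m=1$, $u_k=k^{n+\nu-1}$ is persistently exciting of order exactly $n+\nu$ while $v_k=u_{k+1}-u_k$ has degree $n+\nu-2$ and is persistently exciting only of order $n+\nu-1$, so $v$ fails the hypothesis of Theorem~\ref{thm:data-drivenTransferFunction} and the reduction stalls. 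So the bridge you hope for in your last paragraph is provably unavailable. It is worth noting that the paper's own proof is no more careful at this point: it asserts that $v_{[0,T-1]}$ inherits the excitation order of $u_{[0,T]}$ ``because the columns of $V_{1,q}$ are obtained by subtracting disjoint pairs of columns of $U_{1,q+1}$, which are linearly independent,'' but consecutive column pairs are not disjoint, the columns of a fat full-row-rank Hankel matrix are not linearly independent in general, and the same polynomial input defeats that argument as well. The honest repair is the one your analysis implicitly suggests: strengthen the hypothesis to $u_{[0,T]}$ persistently exciting of order $n+\nu+1$, after which your $\Delta$-factorization makes the reduction airtight; under the corollary's literal hypothesis, the persistency-transfer step is false in general.
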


\begin{proof}
For the dynamics \eqref{eq:update-d}, the steady-state transfer 
function from the input $v$ to the output $r$ is given by 
$G_{vr}=C(I-A)^\inv B$. 
Hence, a direct application of Theorem 
\ref{thm:data-drivenTransferFunction} 
to the signals generated by \eqref{eq:update-d} guarantees that 
$G_{vr}$ can be computed as $G_{vr} =  R_{1,q} M$, where $M$ is as in 
\eqref{eq:definionMatrixM-noisy}. 
Observe that, because $u_{[0,T]}$ is persistently exciting of order 
$n+1$, then $v_{[0,T-1]}$ is also persistently exciting of the same 
order (note that $u_{[0,T]}$ contains one additional sample as 
compared to $v_{[0,T-1]}$).
This is because the columns of $V_{1,q}$ are obtained by subtracting 
disjoint pairs of columns of $U_{1,q+1}$, which are linearly 
independent.
Finally, the claim follows by noting that 
$G_{vr}$ coincides with the steady-state transfer function $G$ of 
\eqref{eq:plantModel}, as defined in \eqref{eq:yTransferFunctions}.
\end{proof}
\smallskip

Corollary \ref{cor:constantNoiseG} provides a direct way to compute 
the transfer function $G$ when the training data is affected by 
constant noise. 
Notice that, the Hankel matrices $R_{1,q}, V_{1,q}$, and
$\sps{R}{diff}_{1,q}$ can be computed directly from an input-output 
trajectory of \eqref{eq:plantModel} by processing the data as 
described by  \eqref{eq:systemDifferences}.

\smallskip
\begin{remark}{\bf \textit{(Sample Complexity with Constant Noise)}}
\label{rem:numberOfSamplesCorollary}
Notice that statement of Corollary \ref{cor:constantNoiseG} requires
the availability of a $(T+1)$-long signal $u_{[0,T]}$, and of a 
$(T+2)$-long signal $y_{[0,T+1]}$ (where the additional sample is 
needed to compute the difference signal). By comparison with Remark 
\ref{rem:numberOfSamplesTheorem}, the presence of an 
unknown constant disturbance in the training data requires to collect 
one additional sample as opposed to the case where the disturbance is 
known.
\QEDB
\end{remark}
\smallskip

\begin{remark}{\bf \textit{(Numerical Accuracy)}}
While Theorem~\ref{thm:data-drivenTransferFunction} provides a way 
to compute $G$, it remains unclear whether collecting a number of 
control experiments and computing $G$ according to 
Theorem~\ref{thm:data-drivenTransferFunction} provides an advantage  
(numerically) as opposed to identifying the matrices $(A,B,C)$ and 
using the closed-form expression \eqref{eq:yTransferFunctions}.
Notice that computing $G=C (I-A)^\inv B$ involves a matrix inversion, 
which may be ill-conditioned when $(I-A)$ is close to singular.
In \figurename~\ref{fig:montecarloG}(a) we illustrate the numerical 
error between the closed-form expression \eqref{eq:yTransferFunctions} 
and the characterization in 
Theorem~\ref{thm:data-drivenTransferFunction}, for increasing system 
size $n$. 
Not surprisingly, the figure demonstrates that the error is an 
increasing function of $n$.
In \figurename~\ref{fig:montecarloG}(b) we compare the numerical 
accuracy in predicting the steady-state output by using the 
closed form expression \eqref{eq:yTransferFunctions} and by using the 
characterization in  Theorem~\ref{thm:data-drivenTransferFunction}.
In this simulation, $\sbs{y}{eq}$ is obtained by running the 
\eqref{eq:plantModel} to convergence under a constant input 
$\sbs{u}{eq} \in \real^m$.
The simulations reveal that the error in the two cases is of the same
order of magnitude, and that the model-based expression is, on 
average, more accurate than the data-driven counterpart. 
This fact can be interpreted by noting that solving for $M$ in 
Theorem~\ref{thm:data-drivenTransferFunction} requires a matrix 
inversion (of the matrices 
$\sps{Y}{diff}_{\nu,q}, U_{\nu,q}, \sps{W}{diff}_{\nu,q}$, and 
$W_{\nu,q}$) of Hankel matrices that have size strictly larger than 
$n$, thus possibly originating higher numerical inaccuracies as 
opposed to computing the inverse of $(A-I)$.
\QEDB
\end{remark}

\begin{figure}[t]
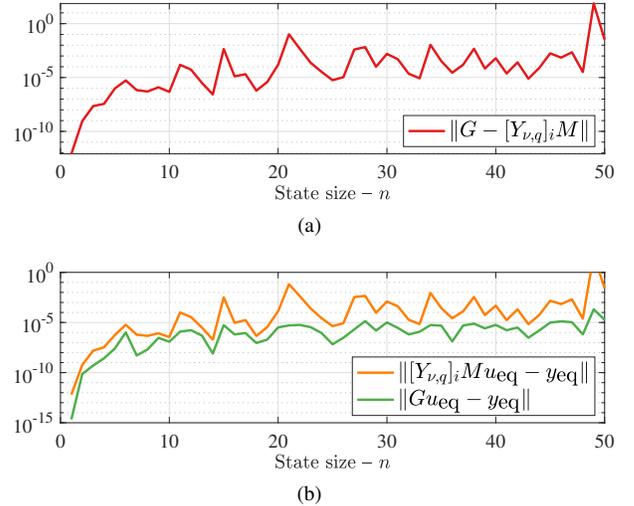

\centering \subfigure[]{\includegraphics[width=.9\columnwidth]{%
GError_montecarlo}}\\
\centering \subfigure[]{\includegraphics[width=.9\columnwidth]{%
outputError_montecarlo}}
\caption{Montecarlo simulations illustrating precision of data-driven
transfer function computed according to 
Theorem~\ref{thm:data-drivenTransferFunction}, for increasing system 
size $n$.
The curves illustrate the average over $100$ experiments, where 
matrices $A$ and $B$ have been populated with random i.i.d. normal 
entries and where the modulus of the eigenvalues of $A$ has been 
chosen in the interval $(0,1)$. All computations have been carried out 
using the built-in function \texttt{mldivide} in 
\texttt{Matlab 2019a}.
(a) Error between closed-form expression \eqref{eq:yTransferFunctions} 
and the characterization in 
Theorem~\ref{thm:data-drivenTransferFunction} . (b) Error in predicting 
the steady-state output by using \eqref{eq:yTransferFunctions} and by 
using Theorem~\ref{thm:data-drivenTransferFunction}.
$\sbs{u}{eq}$ is chosen randomly with i.i.d. entries and $\sbs{y}{eq}$ 
is obtained by running \eqref{eq:plantModel} to convergence.}
\label{fig:montecarloG}
\end{figure}

\subsection{Unknown Noise Terms: Inexact Transfer Function}
While Theorem \ref{thm:data-drivenTransferFunction} provides a way to 
compute $G$ from data, it requires full knowledge of the disturbance 
$w_{[0,T-1]}$, which is impractical when the exogenous disturbance is 
unknown.
To this end, in the following result we characterize the error that 
originates when applying \eqref{eq:definionMatrixM} with unknown 
$w_{[0,T-1]}$.

\begin{proposition}{\bf \textit{(Error Characterization)}}
\label{prop:errorG}
Let Assumption \ref{ass:stabilityPlant} hold and let 
$\nu \in \integ_{>0}$ denote the observability index of 
\eqref{eq:plantModel}.
Moreover, assume $u_{[0,T-1]}$ and $w_{[0,T-1]}$ are persistently 
exciting signals of order $n + \nu$, and let $q := T-\nu+1$.
Assume $\hat M \in \real^{q \times m \nu}$ is any matrix that 
satisfies:
\begin{align}
\label{eq:Mhat-definition}
\sps{Y}{diff}_{\nu,q} \hat M &= 0, & 
U_{\nu,q} \hat M &= \one_\nu \otimes I_m,
\end{align}
where $\sps{Y}{diff}_{\nu,q}$ is defined according to 
\eqref{eq:diffSignals}.
If $\hat G$ is computed as $\hat G := [Y_{\nu,q}]_i \hat M$,
for any $i \in \until \nu$, then
\begin{align}
\label{eq:GhatMinusG}
 \hat G- G &= CA ([X_{\nu,q}]_i \hat M - (I-A)^\inv B )\nonumber\\
& \quad\quad + (CE+D)[W_{\nu,q}]_i\hat M +
 D [\sps{W}{diff}_{\nu,q}]_i \hat M.
\end{align}
\end{proposition}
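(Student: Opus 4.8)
The plan is to mirror the computation carried out in the proof of Theorem~\ref{thm:data-drivenTransferFunction}, but to retain the noise-dependent terms rather than annihilate them: since $\hat M$ satisfies only the two conditions in \eqref{eq:Mhat-definition} and no longer enforces $W_{\nu,q}\hat M = 0$ or $\sps{W}{diff}_{\nu,q}\hat M = 0$, the very terms that vanished in the exact case now survive and constitute the error. The starting point is the same structural identity used in \eqref{eq:proofYdiff=0}. Combining $x_{k+1}-x_k = (A-I)x_k + Bu_k + Ew_k$ with $y_{k+1}-y_k = C(x_{k+1}-x_k) + D(w_{k+1}-w_k)$ and stacking column-by-column over the indices collected in block-row $i$, I obtain
\begin{align*}
[\sps{Y}{diff}_{\nu,q}]_i = C(A-I)[X_{\nu,q}]_i + CB[U_{\nu,q}]_i + CE[W_{\nu,q}]_i + D[\sps{W}{diff}_{\nu,q}]_i.
\end{align*}

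Next I would right-multiply this identity by $\hat M$ and invoke \eqref{eq:Mhat-definition}. The condition $\sps{Y}{diff}_{\nu,q}\hat M = 0$ forces the block-row product $[\sps{Y}{diff}_{\nu,q}]_i \hat M = 0$, while $U_{\nu,q}\hat M = \one_\nu \otimes I_m$ gives $[U_{\nu,q}]_i \hat M = I_m$ (the $i$-th block row of $\one_\nu \otimes I_m$). This collapses the identity to the residual relation
\begin{align*}
0 = C(A-I)[X_{\nu,q}]_i\hat M + CB + CE[W_{\nu,q}]_i\hat M + D[\sps{W}{diff}_{\nu,q}]_i\hat M.
\end{align*}
In parallel, applying the output equation $y_k = Cx_k + Dw_k$ at the block-row level and right-multiplying by $\hat M$ yields the representation $\hat G = [Y_{\nu,q}]_i \hat M = C[X_{\nu,q}]_i\hat M + D[W_{\nu,q}]_i\hat M$ of the estimated map.

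Finally, I would subtract $G = C(I-A)^\inv B$ to get $\hat G - G = C([X_{\nu,q}]_i\hat M - (I-A)^\inv B) + D[W_{\nu,q}]_i\hat M$, and then eliminate the state-discrepancy factor using the residual relation. Writing $C(I-A)(I-A)^\inv B = CB$ and subtracting it from the residual relation gives $C(I-A)([X_{\nu,q}]_i\hat M - (I-A)^\inv B) = CE[W_{\nu,q}]_i\hat M + D[\sps{W}{diff}_{\nu,q}]_i\hat M$, hence $C([X_{\nu,q}]_i\hat M - (I-A)^\inv B) = CA([X_{\nu,q}]_i\hat M - (I-A)^\inv B) + CE[W_{\nu,q}]_i\hat M + D[\sps{W}{diff}_{\nu,q}]_i\hat M$. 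Substituting this back and absorbing the leftover $D[W_{\nu,q}]_i\hat M$ term into the $CE[W_{\nu,q}]_i\hat M$ term produces exactly \eqref{eq:GhatMinusG}. I do not expect a genuine obstacle here: the argument is a direct algebraic manipulation, and the only point requiring care is the correct identification of the $i$-th block rows of the matrix products and the verification that the two conditions in \eqref{eq:Mhat-definition} collapse the structural identity to leave precisely the state-discrepancy term $CA([X_{\nu,q}]_i\hat M - (I-A)^\inv B)$ together with the two noise contributions through $[W_{\nu,q}]_i\hat M$ and $[\sps{W}{diff}_{\nu,q}]_i\hat M$. As a sanity check, in the exact setting of Theorem~\ref{thm:data-drivenTransferFunction} one additionally has $W_{\nu,q}M = 0$ and $\sps{W}{diff}_{\nu,q}M = 0$, which force $[X_{\nu,q}]_i M = (I-A)^\inv B$ and make all three terms vanish, recovering $\hat G = G$.
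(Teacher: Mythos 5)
Your proof is correct, but it takes a genuinely different route from the paper's. The paper proves \eqref{eq:GhatMinusG} by comparison with an exact solution: it introduces a matrix $M$ satisfying the full conditions \eqref{eq:definionMatrixM} of Theorem~\ref{thm:data-drivenTransferFunction}, writes $\hat G - G = [Y_{\nu,q}]_i(\hat M - M)$, and manipulates $[\sps{Y}{diff}_{\nu,q}]_i(\hat M - M)$ using the annihilation properties $W_{\nu,q}M = 0$, $\sps{W}{diff}_{\nu,q}M = 0$ together with the identity $[X_{\nu,q}]_i M = (I-A)^\inv B$, the latter imported from \textit{Step 2} of the proof of Theorem~\ref{thm:data-drivenTransferFunction} (which uses observability). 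You instead never introduce $M$: you multiply the structural identity $[\sps{Y}{diff}_{\nu,q}]_i = C(A-I)[X_{\nu,q}]_i + CB[U_{\nu,q}]_i + CE[W_{\nu,q}]_i + D[\sps{W}{diff}_{\nu,q}]_i$ by $\hat M$ alone, obtain the residual relation $C(I-A)\bigl([X_{\nu,q}]_i\hat M - (I-A)^\inv B\bigr) = CE[W_{\nu,q}]_i\hat M + D[\sps{W}{diff}_{\nu,q}]_i\hat M$, and convert $C\Delta$ into $CA\Delta$ plus noise terms via the decomposition $C = C(I-A) + CA$, where $\Delta := [X_{\nu,q}]_i\hat M - (I-A)^\inv B$; I verified the algebra and the signs, and it reproduces \eqref{eq:GhatMinusG} exactly. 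What your route buys: it is self-contained and needs strictly fewer ingredients --- no existence of $M$ (which in the paper requires persistency of excitation of \emph{both} $u_{[0,T-1]}$ and $w_{[0,T-1]}$), no observability, and no appeal to Theorem~\ref{thm:data-drivenTransferFunction}; the excitation hypotheses of the proposition are then seen to matter only for guaranteeing that some $\hat M$ satisfying \eqref{eq:Mhat-definition} exists, not for the validity of the error formula. One small caveat on your closing sanity check: the claim that $W_{\nu,q}M = 0$ and $\sps{W}{diff}_{\nu,q}M = 0$ ``force'' $[X_{\nu,q}]_i M = (I-A)^\inv B$ is not pure algebra --- per block row the residual relation yields only $C(I-A)\Delta = 0$, and concluding $\Delta = 0$ requires stacking the relation over all $i \in \until{\nu}$ and invoking observability, exactly as in the paper's \textit{Step 2} and Remark~\ref{rem:whenG=Ghat}. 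Since that remark is an aside rather than a step of the proof, it does not affect correctness.
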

\begin{proof}
Let $M$ be any matrix as in \eqref{eq:definionMatrixM} and 
$\hat M$ be any matrix as in \eqref{eq:Mhat-definition}.
By noting  that $\hat G - G = [Y_{\nu,q}]_i (\hat M - M)$, we will 
prove this claim by showing that $[Y_{\nu,q}]_i (\hat M - M)$ equals 
the right hand side of \eqref{eq:GhatMinusG}.
By using 
$[\sps{Y}{diff}_{\nu,q}]_i M = 0$, 
$[\sps{Y}{diff}_{\nu,q}]_i \hat M = 0$, and by recalling that 
$[\sps{Y}{diff}_{\nu,q}]_i = C(A-I) [X_{\nu,q}]_i 
+ CB [U_{\nu,q}]_i + CE [W_{\nu,q}]_i + D [\sps{W}{diff}_{\nu,q}]_i$:
\begin{align}
\label{eq:auxYMMhat}
0 &= [\sps{Y}{diff}_{\nu,q}]_i(\hat M - M)\\
&= C(A-I) [X_{\nu,q}]_i (\hat M - M) 
+ CB [U_{\nu,q}]_i (\hat M - M) \nonumber\\ 
& \quad\quad + CE [W_{\nu,q}]_i (\hat M - M)
+ D [\sps{W}{diff}_{\nu,q}]_i (\hat M - M),\nonumber\\
&= C(A-I) [X_{\nu,q}]_i (\hat M - M) 
+ CE [W_{\nu,q}]_i \hat M + D [\sps{W}{diff}_{\nu,q}]_i \hat M,
\nonumber
\end{align}
where we used $ [U_{\nu,q}]_i \hat M  =  [U_{\nu,q}]_i M $,
$ [W_{\nu,q}]_i M =0$, and $[\sps{W}{diff}_{\nu,q}]_i M=0$.
Next, by recalling that 
$[Y_{\nu,q}]_i (\hat M - M) = C [X_{\nu,q}]_i (\hat M - M) 
+ D[W_{\nu,q}]_i (\hat M - M)$
and by using \eqref{eq:auxYMMhat}:
\begin{align*}
[Y_{\nu,q}]_i (\hat M - M) &= 
CA [X_{\nu,q}]_i (\hat M - M) \\
&\quad\quad + (CE+D) [W_{\nu,q}]_i \hat M,
+ D [\sps{W}{diff}_{\nu,q}]_i \hat M.
\end{align*}
Finally, by iterating \textit{Step 2} in the proof of 
Theorem \ref{thm:data-drivenTransferFunction}, we obtain
$[X_{\nu,q}]_i  M = (I- A)^\inv B$, which proves the claim.
\end{proof}

By recalling that $(I-A)^\inv B = [X_{\nu,q}]_iM $, where $M$ is any 
matrix that satisfies \eqref{eq:definionMatrixM}, 
\eqref{eq:GhatMinusG} can equivalently be written as:
\begin{align}
\label{eq:equivalentGMinusGhat}
 \hat G- G &= CA[X_{\nu,q}]_i (\hat M - M) \nonumber\\
& \quad\quad + (CE+D)[W_{\nu,q}]_i\hat M +
 D [\sps{W}{diff}_{\nu,q}]_i \hat M.
\end{align}

Proposition \ref{prop:errorG} shows that the absolute error $\hat G-G$ 
is governed by three terms: (i) the difference 
$[X_{\nu,q}]_i \hat M - (I-A)^\inv B$, which describes the error 
between an inexact equilibrium point 
$\sbs{x}{inexact} := [X_{\nu,q}]_i \hat M u$, for some 
$u \in \real^m$, obtained by using an inexact matrix $\hat M$ and an 
exact equilibrium point $\sbs{x}{exact} := (I-A)^\inv B u$
obtained by using full knowledge of the system model,
(ii) the quantity $[W_{\nu,q}]_i\hat M$, which can be made equal to 
zero only when the training data $w_{[0,T-1]}$ is known, and, 
similarly,  (iii) the quantity $[\sps{W}{diff}_{\nu,q}]_i\hat M$, which 
can also be made equal to zero only when $w_{[0,T-1]}$ is known.
We discuss in the following remark the relationship between Theorem
\ref{thm:data-drivenTransferFunction} and Proposition \ref{prop:errorG}.

\begin{remark}{\bf \textit{(Relationship Between  Theorem \ref{thm:data-drivenTransferFunction} and Proposition \ref{prop:errorG})}}
\label{rem:whenG=Ghat}
We note that if, in addition to \eqref{eq:Mhat-definition}, $\hat M$
satisfies:
\begin{align*}
\sps{W}{diff}_{\nu,q} \hat M = 0, \text{ and } W_{\nu,q} \hat M=0, 
\end{align*}
then the following identity holds: 
$ [X_{\nu,q}]_i \hat M = (I-A)^\inv B$, and thus
$\hat G - G = 0$. Hence, in this case, we recover the characterization 
presented in Theorem  \ref{thm:data-drivenTransferFunction}.
To show that $ (I-A)^\inv B = [X_{\nu,q}]_i \hat M$, we recall 
that 
$[\sps{Y}{diff}_{\nu,q}]_i = C(A-I) [X_{\nu,q}]_i 
+ CB [U_{\nu,q}]_i + CE [W_{\nu,q}]_i + D [\sps{W}{diff}_{\nu,q}]_i$
and, by using $[\sps{Y}{diff}_{\nu,q}]_i \hat M =0$, 
$[U_{\nu,q}]_i \hat M =I$, $[W_{\nu,q}]_i \hat M =0$,  and 
$[\sps{W}{diff}_{\nu,q}]_i \hat M =0$, we have:
\begin{align*}
0 = C(A-I) [X_{\nu,q}]_i \hat M + CB.
\end{align*}
Since the system is observable and the above identity holds for all 
$i \in \until \nu$, we obtain that 
$[X_{\nu,q}]_i \hat M = (A-I)^\inv B$ (see \textit{Step 2} in the 
proof of Theorem \ref{thm:data-drivenTransferFunction}), thus proving 
the equivalence.
\QEDB
\end{remark}

It follows from Remark \ref{rem:whenG=Ghat} that, in the special case 
where the training data is noiseless (i.e., 
$W_{\nu,q}=\sps{W}{diff}_{\nu,q}=0$), Proposition \ref{prop:errorG} 
guarantees that $\hat G=G$.
Finally, we discuss in the following corollary the special case where
matrix $C$ is full column-rank.

\begin{corollary}{\bf \textit{(Error Characterization for Full 
Column-Rank $C$)}}
\label{cor:errorG}
Let Assumption \ref{ass:stabilityPlant} be satisfied and let $\hat G$
be as in \eqref{eq:GhatMinusG}.
If the Observability is $\nu=1$, then 
\begin{align}
\label{eq:GhatMinusGCorollary}
\hat G  - G &=  \Big( C (I-A)^\inv E [W_{\nu,q}]_i +D [W_{\nu,q}]_i \\
&\quad\quad\quad\quad\quad\quad
+ C (I-A)^\inv C^\pinv D [\sps{W}{diff}_{\nu,q}]_i \Big) \hat M.
\nonumber
\end{align}
\end{corollary}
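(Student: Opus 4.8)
The plan is to specialize the general error formula \eqref{eq:GhatMinusG} from Proposition \ref{prop:errorG} to the case $\nu = 1$, exploiting the fact that a full column-rank $C$ admits a left inverse $C^\pinv$ with $C^\pinv C = I_n$, which lets us solve explicitly for the state-estimation error term. First I would write down \eqref{eq:GhatMinusG} with $\nu = 1$, so that the first term reads $CA([X_{1,q}]_i \hat M - (I-A)^\inv B)$, and focus on rewriting the bracketed quantity $[X_{1,q}]_i \hat M - (I-A)^\inv B$ in terms of the known Hankel data. The key observation is that when $\nu = 1$, the output equation $y_k = C x_k + D w_k$ gives, at the data level, $[Y_{1,q}]_i = C [X_{1,q}]_i + D [W_{1,q}]_i$, so that applying $\hat M$ and using $C^\pinv C = I_n$ yields an expression for $[X_{1,q}]_i \hat M$ purely in terms of $\hat G = [Y_{1,q}]_i \hat M$ and the noise Hankel term.

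The heart of the argument is then to compute $[X_{1,q}]_i \hat M$ explicitly. From $[Y_{1,q}]_i \hat M = C [X_{1,q}]_i \hat M + D [W_{1,q}]_i \hat M$ and the definition $\hat G = [Y_{1,q}]_i \hat M$, left-multiplying the rearranged equation by $C^\pinv$ gives $[X_{1,q}]_i \hat M = C^\pinv (\hat G - D [W_{1,q}]_i \hat M)$. Meanwhile, from the exact characterization (Step 2 of the proof of Theorem \ref{thm:data-drivenTransferFunction}) we have $(I-A)^\inv B = [X_{1,q}]_i M$, and since $G = C[X_{1,q}]_i M = C (I-A)^\inv B$, applying $C^\pinv$ gives $(I-A)^\inv B = C^\pinv G$ only up to the subtlety that $C^\pinv C = I_n$ makes this exact. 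Subtracting, the bracketed state-error term becomes $C^\pinv (\hat G - G) - C^\pinv D [W_{1,q}]_i \hat M$. I would then substitute this back into the $CA(\cdots)$ term of \eqref{eq:GhatMinusG}, so that $CA \cdot C^\pinv$ factors emerge.

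The final step is bookkeeping: substitute the rewritten first term back into \eqref{eq:GhatMinusG} with $\nu = 1$ and collect all the noise-dependent Hankel quantities. The term $CA C^\pinv (\hat G - G)$ will appear on the right but also involve $\hat G - G$ itself, so I expect to move it to the left-hand side and solve for $\hat G - G$, which is why the claimed formula \eqref{eq:GhatMinusGCorollary} has the structure it does — effectively $(I - CAC^\pinv)(\hat G - G)$ equals a combination of the $[W_{1,q}]_i$ and $[\sps{W}{diff}_{1,q}]_i$ terms, and the identity $C(I-A)^\inv = (I - CAC^\pinv)^\inv C \cdots$ or a Woodbury-type manipulation recovers the $C(I-A)^\inv E$ and $C(I-A)^\inv C^\pinv D$ coefficients displayed in the statement. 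The main obstacle I anticipate is precisely this algebraic consolidation: correctly tracking how $CA$, $C^\pinv$, and $(I-A)^\inv$ combine so that the self-referential $\hat G - G$ terms cancel into the clean prefactors $C(I-A)^\inv$, rather than an error in the coefficient of the $D[\sps{W}{diff}_{1,q}]_i$ term, where the $C^\pinv$ appears sandwiched inside $(I-A)^\inv$. I would verify the coefficient structure by checking the noiseless limit ($[W_{1,q}]_i = \sps{W}{diff}_{1,q} = 0$), which must return $\hat G - G = 0$ consistently with Remark \ref{rem:whenG=Ghat}.
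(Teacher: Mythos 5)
Your intermediate algebra is correct up to the fixed-point equation, and your route is genuinely different from the paper's. The paper does not start from the final formula \eqref{eq:GhatMinusG} at all: it returns to the intermediate identity \eqref{eq:auxYMMhat} from the proof of Proposition \ref{prop:errorG}, which for $\nu=1$ reads $0 = C(A-I)[X_{1,q}]_i(\hat M - M) + CE[W_{1,q}]_i\hat M + D[\sps{W}{diff}_{1,q}]_i\hat M$; since $\nu=1$ makes $C$ full column rank, left-multiplying by $C^\pinv$ solves directly for $[X_{1,q}]_i(\hat M - M) = (I-A)^\inv\bigl(E[W_{1,q}]_i\hat M + C^\pinv D[\sps{W}{diff}_{1,q}]_i\hat M\bigr)$, and substituting into $[Y_{1,q}]_i(\hat M - M) = C[X_{1,q}]_i(\hat M - M) + D[W_{1,q}]_i(\hat M - M)$ together with $[W_{1,q}]_iM = 0$ gives \eqref{eq:GhatMinusGCorollary} in two lines, with no inversion beyond $(I-A)^\inv$. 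Your plan instead lands on the self-referential equation $(I - CAC^\pinv)(\hat G - G) = (CE + D - CAC^\pinv D)[W_{1,q}]_i\hat M + D[\sps{W}{diff}_{1,q}]_i\hat M$ and must invert $I - CAC^\pinv$.

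There is a genuine gap in closing that inversion, and it sits exactly where you predicted. Invertibility of $I_p - CAC^\pinv$ is the easy part: one checks the explicit formula $(I - CAC^\pinv)^\inv = C(I-A)^\inv C^\pinv + I - CC^\pinv$ (valid because $C^\pinv C = I_n$ and $I-A$ is invertible by Assumption \ref{ass:stabilityPlant}), and the companion identity $(I - CAC^\pinv)^\inv C = C(I-A)^\inv$ then handles the first coefficient cleanly, since $CE + D - CAC^\pinv D = CE + (I - CAC^\pinv)D$ yields $C(I-A)^\inv E + D$ as claimed. The real problem is the $[\sps{W}{diff}_{1,q}]_i$ term: applying the explicit inverse gives $C(I-A)^\inv C^\pinv D[\sps{W}{diff}_{1,q}]_i\hat M + (I - CC^\pinv)D[\sps{W}{diff}_{1,q}]_i\hat M$, and when $p>n$ the residual $(I - CC^\pinv)D[\sps{W}{diff}_{1,q}]_i\hat M$ does \emph{not} vanish by matrix algebra alone — $(I - CAC^\pinv)^\inv D \neq C(I-A)^\inv C^\pinv D$ as matrices. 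It vanishes here only because \eqref{eq:auxYMMhat} itself forces $D[\sps{W}{diff}_{1,q}]_i\hat M = -C\bigl((A-I)[X_{1,q}]_i(\hat M - M) + E[W_{1,q}]_i\hat M\bigr)$, i.e., its columns lie in the range of $C$, so $CC^\pinv D[\sps{W}{diff}_{1,q}]_i\hat M = D[\sps{W}{diff}_{1,q}]_i\hat M$. Your plan never supplies this range condition, and your proposed sanity check (the noiseless limit) cannot detect its absence, since the residual multiplies $[\sps{W}{diff}_{1,q}]_i\hat M$ itself and vanishes trivially in that limit. Once you import the range condition — or, as the paper does, use it implicitly \emph{before} inverting, by applying $C^\pinv$ directly to \eqref{eq:auxYMMhat} — your derivation closes and agrees with \eqref{eq:GhatMinusGCorollary}.
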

\begin{proof}
When $\nu=1$, then $C$ is of full column-rank, and in this case 
\eqref{eq:auxYMMhat} implies:
\begin{align*}
[X_{\nu,q}]_i (\hat M - M) &= 
(A-I)^\inv E [W_{\nu,q}]_i \hat M  \\
&\quad\quad + (A-I)^\inv C^\pinv D [\sps{W}{diff}_{\nu,q}]_i \hat M.
\end{align*}
The claim follows by recalling that
$[Y_{\nu,q}]_i (\hat M - M) = C [X_{\nu,q}]_i (\hat M - M) 
+ D[W_{\nu,q}]_i (\hat M - M)$.
\end{proof}

We conclude this section by discussing the particular case in which
$\hat M$ is chosen as the (unique) minimum-norm solution of the set of 
equations \eqref{eq:Mhat-definition}.

\begin{example}{\bf \textit{(Optimal Selection of Training Data)}}
Let $\hat M^*$ be the minimum-norm solution of 
\eqref{eq:Mhat-definition}:
\begin{align*}
\hat M^* := \arg \min_{\hat M \in \real^{q \times m \nu}} ~~~ & 
\norm{\hat M}_F\\
\text{s.t.~~~~~~~} &
\sps{Y}{diff}_{\nu,q} \hat M = 0, 
~~U_{\nu,q} \hat M = \one_\nu \otimes I_m,
\end{align*}
where we recall that $\norm{\hat M}_F$ denotes the Frobenius norm of 
$\hat M$.
Then, $\hat M^*$ can be written as:
\begin{align*}
\hat M^* = 
\begin{bmatrix} \sps{Y}{diff}_{\nu,q} \\ 
U_{\nu,q} \end{bmatrix}^\pinv 
\begin{bmatrix} 0 \\ I \end{bmatrix}
:= \begin{bmatrix}
Y^+ ~U^+
\end{bmatrix}
\begin{bmatrix} 0 \\ \one_\nu \otimes I_m \end{bmatrix}
= U^+ (\one_\nu \otimes I_m),
\end{align*}
where $Y^+ \in \real^{q \times p\nu}$ and 
$U^+ \in \real^{q \times m\nu}$ are matrices that
satisfy the identities: 
$\sps{Y}{diff}_{\nu,q} Y^+ = I$, $U_{\nu,q} Y^+ = 0$,
$\sps{Y}{diff}_{\nu,q}  U^+ = 0$, and $U_{\nu,q} U^+ = I$.
The above equation implies 
$[W_{\nu,q}]_i \hat M = [W_{\nu,q}]_i U^+$ and
$[\sps{W}{diff}_{\nu,q}]_i \hat M = [W_{\nu,q}]_i U^+$.

By combining these relationships with \eqref{eq:GhatMinusGCorollary}, 
we have:
\begin{align*}
\norm{\hat G  - G} &\leq   
\norm{C (I-A)^\inv E + D} \norm{[W_{\nu,q}]_i \hat M} \\
&\quad\quad\quad\quad\quad\quad
+ \norm{C (I-A)^\inv C^\pinv D} 
\norm{ [\sps{W}{diff}_{\nu,q}]_i  \hat M}\\
&\leq 
\norm{C (I-A)^\inv E + D} \norm{[W_{\nu,q}]_i U^+ } \\
&\quad\quad\quad\quad\quad\quad
+ \norm{C (I-A)^\inv C^\pinv D} 
\norm{ [\sps{W}{diff}_{\nu,q}]_i U^+}.
\end{align*}
Since $U^+ $ is a right-inverse of $U_{\nu,q}$, the above bound 
suggests that $G - \hat G =0$ can be obtained when the signal 
$u_{[0,T-1]}$ is chosen so that the rows of $W_{\nu,q}$ and of 
$\sps{W}{diff}_{1,q}$ are orthogonal to the columns of the matrix 
$U^+$.
\QEDBL
\end{example}

\section{Tracking Performance in the Presence of Time-Varying Disturbances}\label{sec:6}

Having solved challenges (Ch1)-(Ch3), we are now ready to characterize the transient performance of the 
controller~\eqref{eq:gradientFlowController}. To this aim, we let
\begin{align}
\label{eq:gradientError}
e_k &:= \nabla_u \phi(u_k, y_k) 
+ \hat G^\tsp \nabla_y \phi(u_k,y_k) \nonumber\\
& \quad\quad\quad\quad -
\mb{E}_{y_k} [\nabla_u \phi(u_k, y_k) 
+ \hat G^\tsp \nabla_y \phi(u_k,y_k)],
\end{align}
denote the gradient error that originates from using a single-point 
gradient approximation based on the measurement of the output of the 
system.

\begin{remark}{\bf \textit{(Common Assumptions That Guarantee Bounded Gradient Error)}}
In what follows, we make the implicit assumption that 
the gradient error $\mathbb{E}[\norm{e_k}]$ is bounded. 
Such assumption is commonly adopted in the literature (see 
e.g.~\cite{AK-PR:20} for a thorough discussion). 
Commonly-adopted assumptions that guarantee boundedness of the gradient 
error include uniform boundedness assumptions of the form:
\begin{align*}
\mathbb{E}[\norm{e_k}] < \sigma, \text{ for all } k \in \integpos,
\end{align*}
for some $\sigma \in \real_{>0}$, or bounded variance assumptions 
of the form:
\begin{align*}
\mathbb{E} [\norm{e_k}^2] \leq \norm{
\mb{E}_{y_k} [\nabla_u \phi(u_k, y_k) 
+ \hat G^\tsp \nabla_y \phi(u_k,y_k)]}^2 + \bar \sigma^2.
\end{align*}
for some $\bar \sigma \in \realpos$.
We also notice that -- except for the use of the 1-norm as opposed to 
the 2-norm -- due to unbiasedness, uniform boundedness and bounded 
variance assumptions are equivalent.
\QEDB
\end{remark}

\begin{theorem}{\bf \textit{(Tracking of Time-Varying Stable 
Optimizer)}}
\label{thm:trackingBoudn}
Let Assumption \ref{ass:lipschitz-convexity} be 
satisfied, and let $\xi_k:=(x_{k}, u_k)$ and  
$\sps{\xi}{so}_k := (\sps{x}{so}_{k}, \sps{u}{so}_k)$, where
$\sps{u}{so}_{k}$ and $\sps{x}{so}_k$ are as in Definition 
\ref{def:stableOptimizer}. Then, for any $k \in \integpos$,
the solutions of \eqref{eq:gradientFlowController} satisfy:
\begin{align}
\label{eq:trackingError}
& \mathbb E [\norm{\xi_{k+1} - \sps{\xi}{so}_{k+1}} ]
\leq 
\beta_1 \mathbb E \left[\norm{u_{k}- \sps{u}{so}_k} \right]
+ \beta_2 \mathbb E \left[\norm{x_{k}- \sps{x}{so}_k}\right] 
\nonumber\\
& \quad
+ \gamma_1 \mathbb E [ \norm{e_k} ] 
+ \gamma_2 
\norm{\sps{u}{so}_{k+1}- \sps{u}{so}_k} 
+ \gamma_3 
\mathbb E [ \sup_{t \in \integpos} \norm{\sps{x}{so}_{k+1}- \sps{x}{so}_k}],
\end{align}
where $e_k$ defined in \eqref{eq:gradientError} and, for any  
$\kappa \in (0,1)$,
\begin{align*}
\beta_1 &= \sqrt{1 - \eta \mu} + \eta \hat \ell^\nabla \norm{G-\hat G},
\quad \hat\ell^\nabla := \ell_u^\nabla + \norm{\hat G} \ell_y^\nabla,\\
\beta_2 &= 
\sqrt{\frac{\bar \lambda(P)}{\underline \lambda(P)}
\left( 1 - (1\!-\!\kappa) 
\frac{ \underline \lambda(Q)}{\bar \lambda(P)} \right)}
+ \eta \hat \ell^\nabla \norm{C},\\
\gamma_1 & = \eta , \quad 
\gamma_2 = 1, \quad
\gamma_3 = \max\{ 
\sqrt{\frac{2\bar \lambda(P)}{ \kappa \underline \lambda(Q)}}, 
\frac{4\norm{A^\tsp P}}{\kappa \underline \lambda(Q)} \}.
\end{align*}
Moreover, if $\beta_1<1$ and $\beta_2<1$, then $\sps{u}{so}_k$ exists 
and is unique.
\end{theorem}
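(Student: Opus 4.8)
The plan is to establish the one-step contraction bound~\eqref{eq:trackingError} by decomposing the error $\xi_{k+1}-\sps{\xi}{so}_{k+1}$ into its two block components (the state error $x_{k+1}-\sps{x}{so}_{k+1}$ and the input error $u_{k+1}-\sps{u}{so}_{k+1}$) and bounding each separately in expectation, then combining. I would first insert the intermediate point $\sps{\xi}{so}_k$ via the triangle inequality, writing $\norm{\xi_{k+1}-\sps{\xi}{so}_{k+1}}\le\norm{\xi_{k+1}-\sps{\xi}{so}_k}+\norm{\sps{\xi}{so}_k-\sps{\xi}{so}_{k+1}}$; the second term accounts for the drift of the time-varying optimizer and produces the $\gamma_2,\gamma_3$ terms, while the first term must be shown to contract toward the stable optimizer at the current time.

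For the \emph{input} component I would analyze the gradient update in~\eqref{eq:gradientFlowController}. The key is to add and subtract the exact (expected) gradient so that the stochastic update splits into a deterministic descent step plus the zero-mean error $e_k$ from~\eqref{eq:gradientError}; taking expectations, $e_k$ contributes the $\gamma_1\mathbb{E}[\norm{e_k}]$ term. The deterministic part is where the coefficient $\beta_1=\sqrt{1-\eta\mu}+\eta\hat\ell^\nabla\norm{G-\hat G}$ should emerge: the $\sqrt{1-\eta\mu}$ factor is the standard strong-convexity contraction of a gradient step with step $\eta$ (using Assumption~\ref{ass:lipschitz-convexity}(c) and the stationarity/variational inequality characterizing $\sps{u}{so}_k$), while the additive $\eta\hat\ell^\nabla\norm{G-\hat G}$ penalty arises because the gradient is evaluated at the \emph{measured} output $y_k=Cx_k+Dw_k$ rather than at the steady-state map $\hat G u_k+\bar z$. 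Here I would use the composite Lipschitz constant $\hat\ell^\nabla=\ell_u^\nabla+\norm{\hat G}\ell_y^\nabla$ from Assumption~\ref{ass:lipschitz-convexity}(b) and invoke Lemma~\ref{lem:expectationDeviation} together with the Wasserstein bound~\eqref{eq:auxW1} to control the decision-dependent distribution shift, exactly as in Proposition~\ref{prop:errorAtEquilibrium}. The dependence of $y_k$ on $x_k$ (not just $u_k$) further produces the $\eta\hat\ell^\nabla\norm{C}$ cross-term appearing in $\beta_2$.

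For the \emph{state} component I would exploit Schur stability of $A$ (Assumption~\ref{ass:stabilityPlant}) through the Lyapunov pair $(P,Q)$ with $A^\tsp PA-P=-Q$. Since both $x_{k+1}$ and $\sps{x}{so}_{k+1}=(I-A)^\inv(B\sps{u}{so}_{k+1}+Ew_{k+1})$ are driven by the same dynamics, the state-error recursion $x_{k+1}-\sps{x}{so}_k=A(x_k-\sps{x}{so}_k)+B(u_k-\sps{u}{so}_k)$ contracts in the weighted $P$-norm at rate $\sqrt{1-(1-\kappa)\underline\lambda(Q)/\bar\lambda(P)}$, with the free parameter $\kappa\in(0,1)$ splitting the Lyapunov decrease between contraction and a residual that absorbs the optimizer drift; converting between the $P$-norm and the Euclidean norm yields the conditioning factor $\sqrt{\bar\lambda(P)/\underline\lambda(P)}$ in $\beta_2$ and the constants inside $\gamma_3$. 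The main obstacle, and the step needing the most care, is bookkeeping the coupling between the two blocks so that the cross-terms land in the correct coefficients $\beta_1,\beta_2$ rather than being double-counted: the input update reads $y_k$, which mixes in $x_k$, while the state update reads $u_k$, so the two error norms feed into each other and the $\kappa$-parametrized Young/Lyapunov split must be tuned so the resulting $2\times 2$ gain matrix has the claimed entries. Finally, the existence and uniqueness of $\sps{u}{so}_k$ under $\beta_1<1,\beta_2<1$ would follow by a Banach fixed-point argument: the contraction established in~\eqref{eq:trackingError} shows the relevant stable-optimizer map is a contraction in expectation, so it admits a unique fixed point.
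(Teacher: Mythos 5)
Your plan reconstructs the paper's proof essentially step for step: the same triangle-inequality insertion of $\sps{u}{so}_k$ producing $\gamma_2$, the same four-way split of the input update into the sampling error $e_k$ from \eqref{eq:gradientError} (giving $\gamma_1=\eta$), a distributional-shift (``calmness'') term controlled via Lemma~\ref{lem:expectationDeviation} and the Wasserstein bound \eqref{eq:auxW1} (giving $\eta\hat\ell^\nabla\norm{G-\hat G}$), a dynamics-mismatch term from $y_k$ depending on $x_k$ (giving $\eta\hat\ell^\nabla\norm{C}$), and a strong-convexity contraction at the stable optimizer using its variational inequality (giving $\sqrt{1-\eta\mu}$); plus a Lyapunov treatment of the state error with $(P,Q)$ and a Banach fixed-point argument for existence and uniqueness. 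The one methodological difference is in the state step: the paper does not use a Young-type $\kappa$-split but a case analysis on a forward-invariant sublevel set $\Omega$, which is why its drift term carries the supremum $\sup_t \norm{\sps{x}{so}_{t+1}-\sps{x}{so}_t}$; your Young split would in fact yield a per-step drift bound, which is acceptable (and the paper's own constants in \eqref{eq:contractionDynamicalSystem}, with the factor $1/4$ and no $\kappa$, do not exactly match the $\kappa$-parametrized ones in the theorem statement anyway).

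There is, however, one concrete point where your plan, followed literally, will not land on the stated coefficients --- and it is exactly the step you flag as ``needing the most care'' without resolving it. Your state recursion $x_{k+1}-\sps{x}{so}_k = A(x_k-\sps{x}{so}_k)+B(u_k-\sps{u}{so}_k)$ is the correct one for the change of variables $\tilde x_k := x_k - \sps{x}{so}_k$ from Definition~\ref{def:stableOptimizer}, but any Lyapunov estimate applied to it necessarily produces an additional cross term proportional to $\norm{B}\,\mathbb{E}[\norm{u_k-\sps{u}{so}_k}]$, and no $\norm{B}$ appears anywhere in $\beta_1$, $\beta_2$, or $\gamma_3$; it cannot be absorbed into $\beta_1=\sqrt{1-\eta\mu}+\eta\hat\ell^\nabla\norm{G-\hat G}$. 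The paper obtains the stated constants only because its proof writes the error dynamics as $\tilde x_{k+1}=A\tilde x_k + (\sps{x}{so}_k-\sps{x}{so}_{k+1})$, i.e., with no input-coupling term at all --- a recursion consistent with measuring the state error relative to the instantaneous equilibrium $(I-A)^{\negat 1}(Bu_k+Ew_k)$ rather than relative to $\sps{x}{so}_k$, a discrepancy the paper glosses over. So your more honest recursion exposes, rather than closes, this bookkeeping gap: to finish along your route you must either adopt the paper's (coupling-free) error dynamics and justify that choice of reference trajectory, or accept an extra $\norm{B}$-weighted off-diagonal entry in the $2\times 2$ gain matrix, which changes the claimed $\beta_1$, $\beta_2$.
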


\begin{proof}
The proof is organized into four main steps.

\noindent \textit{(1 -- Change of Variables and Contraction Bound)}
Define the change of variables 
$\tilde x_k := x_k - \sps{x}{so}_k =
x_k - (I-A)^\inv B \sps{u}{so}_k - (I-A)^\inv E w_k$. Accordingly,
\eqref{eq:gradientFlowController} read as:
\begin{align*}
\tilde x_{k+1} &= A \tilde x_k + (x_k^* - x_{k+1}^*),\\
u_{k+1} &= u_k - \eta (\nabla_u \phi(u_k, C \tilde x_k + \hat G u_k + \bar z)\\
& \quad\quad\quad\quad\quad\quad +\hat G^\tsp \nabla_u \phi(u_k, C\tilde x_k + \hat G u_k + \bar z)).
\end{align*}
Next, we introduce the following compact notation to denote the 
algorithmic updates \eqref{eq:gradientFlowController} for 
all $\theta \in \real^m$, $u \in \real^m$, $x \in \real^n$:
\begin{align}
\label{eq:notationControllerUpdates}
F_\theta(u,x)&:=\underset{\mc Z(\theta)}{\mb E} [\nabla_u 
\phi(u, Cx + \hat G u + z) \\
& \quad\quad\quad\quad\quad\quad\quad 
+ \hat G^\tsp \nabla_y \phi(u, Cx + \hat G u + z)],\nonumber\\
\hat F(u,x)&:=\nabla_u \phi(u, Cx + \hat G u + z) \nonumber\\
& \quad\quad\quad\quad\quad\quad\quad 
+ \hat G^\tsp \nabla_y \phi(u, Cx + \hat G u + z),\nonumber\\
\mc{C}_\theta(u,x) &:= u - \eta F_\theta(u,x), \quad
\mc{\hat C}(u,x) := u - \eta \hat F(u,x). \nonumber
\end{align}
Accordingly, the left hand side of \eqref{eq:trackingError} satisfies:
\begin{align}
\label{eq:contractionTriangleInequality_1}
&\mathbb{E}[ \norm{\xi_{k+1}- \sps{\xi}{so}_{k+1}}] \leq 
\mathbb{E}[\norm{u_{k+1} - \sps{u}{so}_{k+1}}]
 + \mathbb{E}[\norm{\tilde x_{k+1}}] \\
&\quad\quad\quad
\leq \mathbb{E}[\norm{u_{k+1} - \sps{u}{so}_{k}} ]
+\norm{\sps{u}{so}_{k+1} - \sps{u}{so}_{k}}
+ \mathbb{E}[\norm{\tilde x_{k+1}}],  \nonumber
\end{align}
where we used
$\mathbb{E}[\norm{\sps{u}{so}_{k+1} - \sps{u}{so}_{k}}] = \norm{\sps{u}{so}_{k+1} - \sps{u}{so}_{k}}$ since stable optimizers are 
deterministic quantities.
Moreover, notice that:
\begin{align}
\label{eq:contractionTriangleInequality_2}
\mathbb{E} [ \norm{u_{k+1} - \sps{u}{so}_{k}} ]
&=
\mathbb{E} [\norm{\mc{\hat C} (u_k,\tilde x) 
-\mc C_{\sps{u}{so}_k}(\sps{u}{so}_k,0)} ]\nonumber\\
&\leq 
\mathbb{E}[\norm{e_k}] + \norm{\mc C_{u_k}(u_k,\tilde x) - 
\mc C_{\sps{u}{so}_k}(u_k,\tilde x_k)} \nonumber\\
&\quad
+ \norm{\mc C_{\sps{u}{so}_k}(u_k,\tilde x_k) - 
\mc C_{\sps{u}{so}_k}(u_k,0)} \nonumber\\
&\quad
+ \norm{\mc C_{\sps{u}{so}_k}(u_k,0)
-\mc C_{\sps{u}{so}_k}(\sps{u}{so}_k,0)}.
\end{align}
where we used 
$\mc{\hat C} (u_k,\tilde x) -\mc C_{u_k}(u_k,\tilde x) = e_k$ and we 
remark that the last three terms are deterministic quantities.
Linear convergence of \eqref{eq:gradientFlowController} is 
a direct consequence of three independent properties, namely 
contraction at the equilibrium, calmness to distributional shifts, and 
ease with respect to system dynamics, which we prove next.

\noindent \textit{(2 -- Calmness With Respect to Distributional 
Shifts)}
We will show:
$\norm{\mc{C}_{u_k}(u_k,\tilde x_k) 
- \mc{C}_{\sps{u}{so}_k}(u_k,\tilde x_k)} 
\leq  
\eta \hat \ell^\nabla 
\norm{G-\hat G} \norm{u_k - \sps{u}{so}_k}$.
Indeed, the following estimate holds:
\begin{align*}
\norm{\mc{C}_{u_k}(u_k,\tilde x_k) 
- \mc{C}_{\sps{u}{so}_k}(u_k,\tilde x_k)}  
&\leq \eta \hat \ell^\nabla W_1(\mc Z(u_k), \mc Z (\sps{u}{so}_k))\\
&\leq \eta \hat \ell^\nabla \norm{G-\hat G} \norm{u_k-\sps{u}{so}_k},
\end{align*}
where the first inequality follows by expanding 
\eqref{eq:notationControllerUpdates} and by using
Lemma \ref{lem:expectationDeviation}, and the second inequality follows 
from \eqref{eq:auxW1}.

\noindent \textit{(3 -- Ease With Respect to the System Dynamics)}
We will show that 
$\norm{\mc{C}_{\sps{u}{so}_k}(u_k,\tilde x_k) - 
\mc{C}_{\tilde u_k^*}(u_k,0)} 
\leq 
\eta \hat \ell^\nabla \norm{C} \norm{\tilde x_k}$.
By using Assumption \ref{ass:lipschitz-convexity}(b):
\begin{small}
\begin{align*}
& \norm{\mc{C}_{\sps{u}{so}_k}(u_k,\tilde x_k) 
- \mc{C}_{\sps{u}{so}_k}(u_k,0)}  \\
& \leq  \eta 
 \norm{\E{\mc{Z}_k(\sps{u}{so}_k)}{
 \nabla_u  \phi(u_k, C\tilde x_k \!+\! \hat G u_k \!+\! z)
- \nabla_u \phi(u_k, \hat G u_k \!+ \!z)}} \\
& ~
+ \eta\norm{\hat G^\tsp \!\E{\mc{Z}_k(\sps{u}{so}_k)} {\nabla_y \phi(u_k, C\tilde x_k \!+ \!\hat G u_k \!+\! z) - 
\nabla_y \phi(u_k, \hat G u_k\! +\! z)}}\\
&\leq \eta \hat \ell^\nabla \norm{C}\norm{\tilde x_k},
\end{align*}
\end{small}
which proves the claimed estimate.

\noindent \textit{(4 -- Contraction at the Equilibrium)}
We will show that
$\norm{\mc{C}_{\sps{u}{so}_k}(u_k,0) - 
\mc{C}_{\sps{u}{so}_k}(\sps{u}{so}_k,0) }
\leq  \sqrt{1-\eta \mu} \norm{u_k - \sps{u}{so}_k}$.
This fact follows directly from \cite[Thm 3.12]{SB:14}, and we provide 
a short proof for completeness.
By substituting \eqref{eq:notationControllerUpdates}:
\begin{small}
\begin{align*}
&\norm{\mc{C}_{\sps{u_k}{so}_k}(u,0)
-\mc{C}_{\sps{u}{so}_k}(\sps{u}{so}_k,0) }^2
= \norm{u - \eta F_{\sps{u}{so}_k}(u_k,0) - \sps{u}{so}_k}^2
\\
& ~=
\norm{u_k \!-\! \sps{u}{so}_k}^2  
- 2 \eta F_{\sps{u}{so}_k}(u_k,0)^\tsp (u_k \!-\! \sps{u}{so}_k)
+ \eta^2 \norm{F_{\sps{u}{so}_k}(u_k,0)}^2\\
& ~\leq
(1-\eta \mu) \norm{u_k \!-\! \sps{u}{so}_k}^2  
- 2\eta \Big( 
\E{\mc Z(\sps{u}{so}_k)}{\phi(u_k,C \tilde x_k +\hat G u_k +z)}\\
& \quad \quad + \E{\mc Z(\sps{u}{so}_k)}{\phi(\sps{u}{so}_k ,C \tilde x_k +\hat G \sps{u}{so}_k +z)} \Big)
+ \eta^2 \norm{F_{\sps{u}{so}_k}(u_k,0)}^2\\
&~\leq (1- \eta \mu) \norm{u_k-\sps{u}{so}_k}^2 
+ \alpha \Big(
\E{\mc Z(\sps{u}{so}_k)}{\phi(u_k,C \tilde x_k +\hat G u_k +z)}\\
&\quad\quad
- \E{\mc Z(\sps{u}{so}_k)}{\phi(\sps{u}{so}_k,C \tilde x_k +\hat G \sps{u}{so}_k +z)} \Big)\\
&\quad \leq (1- \eta \mu) \norm{u_k-\sps{u}{so}_k}^2,
\end{align*}
\end{small}
where $\alpha = 2 (\eta^2  \hat\ell^\nabla - \eta)$.
Above, the first inequality follows from 
$\phi(u_k,z_k)-\phi(u_k,\sps{z}{so}_k) \geq  \nabla \phi(u_k,\sps{z}{so}_k)^\tsp 
(u_k-\sps{u}{so}_k) + \frac{\mu}{2} \norm{u_k-\sps{u}{so}_k}^2$
(see Assumption \ref{ass:lipschitz-convexity}(c)), the second 
inequality follows from 
$\norm{F_{\sps{u}{so}_k}(u_k,0)}^2 \leq 2 \hat \ell^\nabla (\phi(u_k,z_k) - \phi(u_k,\sps{z}{so}_k))$ (see 
Assumption ~\ref{ass:lipschitz-convexity}(b)), and the 
last inequality holds because $\sps{u}{so}_k$ is a stable optimizer 
(see \eqref{eq:stableOptimizer}).

\noindent \textit{(5 -- Contraction of the Dynamical System)}
We will prove the following estimate:
\begin{align}
\label{eq:contractionDynamicalSystem}
&\mathbb{E} [ \norm{\tilde x_{k+1}} ] 
\leq 
\sqrt{\frac{\bar \lambda(P)}{\underline \lambda(P)}
\left( 1
- \frac{\underline \lambda(Q)}{4 \bar \lambda(P)}
\right)} 
\mathbb{E} [\norm{\tilde x_k}] \\
& \quad\quad 
+ \max\{ 
\sqrt{\frac{4\bar \lambda(P)}{ \underline \lambda(Q)}}, 
\frac{4\norm{A^\tsp P}}{\underline \lambda(Q)} \}
\mathbb{E} [ \sup_{t \in \integpos} 
\norm{\sps{x}{so}_{k+1}-\sps{x}{so}_k}].    \nonumber
\end{align}
In what follows, we fix the realization of the disturbance $w_k$ and 
(with a slight abuse of notation) we denote by $\tilde x_k$ the 
corresponding (deterministic) state of \eqref{eq:plantModel} and by 
$\sps{x}{so}_k$ the associated (deterministic) stable optimizer.
Let $V(x) := x^\tsp P x$ and define the set:
\begin{align*}
\Omega &:= \{ x\in \real^n : 
V(x) \leq \frac{4 \bar \lambda(P) \norm{A^\tsp P}}{
\underline \lambda(Q)} 
\sup_{t \in \integpos} \norm{\sps{x}{so}_{t+1}-\sps{x}{so}_t}\\
& \quad\quad\quad\quad
\text{ and } 
V(x) \leq \bar \lambda(P)
\sqrt{\frac{4\bar \lambda(P)}{ \underline \lambda(Q)}}
\sup_{t \in \integpos} \norm{\sps{x}{so}_{t+1}-\sps{x}{so}_t}
\}.
\end{align*}
We distinguish among two cases.

\noindent
\textit{(5 -- Case 1)} Suppose $\tilde x_k \not\in \Omega$. 
In this case, we have:
\begin{align}
\label{eq:implicationFormV}
V(\tilde x_{k+1}) - V(\tilde x_{k}) 
&\leq 
-\underline \lambda(Q) \norm{\tilde x_k}^2 
+ \bar \lambda(P) \norm{\sps{x}{so}_{k+1}-\sps{x}{so}_k}^2 \nonumber\\
& \quad\quad+ 2 \norm{A^\tsp P} 
\norm{\sps{x}{so}_{k+1}-\sps{x}{so}_k} 
\norm{\tilde x_k}\nonumber\\
& \leq 
-\frac{1}{4} \frac{\underline \lambda(Q)}{\bar \lambda(P)} 
V(\tilde x_{k}),
\end{align}
where the last inequality follows since $\tilde x_k \not \in \Omega$ 
and by using $V(\tilde x_k) \leq \bar \lambda(P) \norm{\tilde x_k}^2$.
By using $\underline \lambda(P) \norm{\tilde x_k}^2 \leq 
V(\tilde x_k) \leq 
\bar \lambda(P) \norm{\tilde x_k}^2$, \eqref{eq:implicationFormV} 
implies the following bound for the state:
\begin{align}
\label{eq:outsideOmega}
\norm{\tilde x_{k+1}}^2 \leq 
\frac{\bar \lambda(P)}{\underline \lambda(P)}
\left( 1
- \frac{\underline \lambda(Q)}{4 \bar \lambda(P)}
\right) 
\norm{\tilde x_{k}}^2.
\end{align}

\noindent
\textit{(5 -- Case 2)} Suppose $\tilde{x}_k \in \Omega$. 
In this case, we will show that $\Omega$ is forward-invariant, i.e.,  
$\tilde x_{k+1} \in \Omega$. 
By contradiction, let $\epsilon>0$ and let $k_1$ be the first instant 
such that one of the following conditions is satisfied:
\begin{align}
\label{eq:auxContradiction}    
V(\tilde x_{k_1}) &>
\bar \lambda(P) \frac{4 \norm{A^\tsp P}}{\underline \lambda(Q)}
\sup_{t \in \integpos} \norm{\sps{x}{so}_{t+1}-\sps{x}{so}_t} + \epsilon, \text{ or} \nonumber\\
V(\tilde x_{k_1}) &>
\bar \lambda(P) \sqrt{\frac{4 \bar \lambda(P)}{\underline \lambda(Q)}} 
\sup_{t \in \integpos} \norm{\sps{x}{so}_{t+1}-\sps{x}{so}_t} + \epsilon,
\end{align}
It follows by iterating \eqref{eq:implicationFormV} that 
$V(\tilde x_k)$ is strictly decreasing in a neighborhood of $k_1$.
Accordingly, there must exists $0 \leq k_0 < k_1$ such that
$V(\tilde x_{k_0}) > V(\tilde x_{k_1})$. But this contradicts the 
assumption that $k_1$ is the first instant that satisfies 
\eqref{eq:auxContradiction}. So $\Omega$ must be forward invariant.
By recalling the definition of $\Omega$, when $\tilde x_k \in \Omega$:
\begin{align}
\label{eq:insideOmega}
\norm{\tilde x_{k+1}}
\leq
\max\{ 
\sqrt{\frac{4\bar \lambda(P)}{ \underline \lambda(Q)}}, 
\frac{4\norm{A^\tsp P}}{\underline \lambda(Q)} \}
\sup_{t \in \integpos} \norm{\sps{x}{so}_{k+1}-\sps{x}{so}_k}.
\end{align}

Finally, the estimate \eqref{eq:contractionDynamicalSystem} follows by 
combining \eqref{eq:outsideOmega} and \eqref{eq:insideOmega} and by 
taking the expectation on both sides.

To conclude, \eqref{eq:trackingError} follows by substituting the 
estimates derived in the above five steps into 
\eqref{eq:contractionTriangleInequality_1}--\eqref{eq:contractionTriangleInequality_2}.
Notice that, existence and uniqueness of $\sps{u}{so}_k$ follows from 
contractivity and by application of the  Banach fixed-point theorem.
\end{proof}

\begin{remark}{\bf \textit{(Choices of $\eta$ that guarantee 
$\beta_1<1$)}}
\label{rem:feasibilityEta}
To guarantee $\beta_1<1$, the following conditions must hold
simultaneously:
\begin{align}
\label{eq:feasibilityEta}
\norm{G -\hat G} < \frac{\mu}{\hat \ell^\nabla}, 
\quad  \text{ and } \quad
\frac{2 \norm{G -\hat G} - \mu}{\norm{G -\hat G}^2} 
< \eta \leq  \frac{1}{\mu}.
\end{align}
Note that for any $\mu \in \real_{>0}$ and 
$\hat G \in \real^{p \times m}$, there exists a nonempty set of 
choices of $\eta$ that satisfy the second condition in 
\eqref{eq:feasibilityEta}.
To see this, notice that
$\frac{2 \norm{G -\hat G} - \mu}{\norm{G -\hat G}^2}  \leq  \frac{1}{\mu}$ is equivalent to 
$(\norm{G -\hat G} - \mu)^2 \geq 0$, which is satisfied for any
$\mu \in \real_{>0}$ and 
$\hat G \in \real^{p \times m}$.
While there always exists a choice of $\eta$ that guarantees 
the second condition, the first inequality in 
\eqref{eq:feasibilityEta} outlines a feasibility condition. 
Namely, when the absolute error $\norm{G -\hat G}$ is larger than 
the constant $\mu/\hat \ell^\nabla $, then the controller 
\eqref{eq:gradientFlowController} cannot guarantee 
contractivity.

To derive \eqref{eq:feasibilityEta}, notice that 
$\sqrt{1 - \eta \mu}$ admits a real-valued solution if an only if 
$\eta \leq 1/\mu $.
Moreover, the equation
$\sqrt{1 - \eta \mu} + \eta \hat \ell^\nabla \norm{G-\hat G}=1$
yields the solution $\eta = \eta_1 := 0$ and 
$\eta = \eta_2 := (2 \norm{G -\hat G} - \mu)/\norm{G -\hat G}^2$, 
where $\eta_2$ is real only if 
$\hat \ell^\nabla \norm{G -\hat G} \leq \mu$.
Accordingly, $\beta_1<1$ when $\eta< \eta_1$ or 
$\eta > \eta_2$ (see Fig. \ref{fig:rootsBeta=1}), which yields 
\eqref{eq:feasibilityEta}.
\QEDB
\end{remark}
\smallskip

\begin{remark}{\bf \textit{(Choices of $\eta$ that guarantee 
$\beta_2<1$)}}
To guarantee $\beta_2<1$ the controller $\eta$ must be chosen as:
\begin{align*}
\eta < \frac{1}{\hat \ell^\nabla \norm{C}} 
\left(
1 - \sqrt{
\frac{\bar \lambda(P)}{\underline \lambda(P)} \left(
1 - (1\!-\!\kappa)
\frac{\underline \lambda(Q)}{\bar \lambda(P)}
\right)
}
~\right).
\end{align*}
Notice that the quantity $\frac{\bar \lambda(P)}{\underline \lambda(P)} \left(
1 - (1\!-\!\kappa)
\frac{\underline \lambda(Q)}{\bar \lambda(P)}
\right)$ 
is always non-negative since $\underline \lambda(Q)/\bar \lambda(P)<1$
and it strictly smaller that $1$ if the open-loop dynamics 
\eqref{eq:plantModel} are contractive.
\QEDB
\end{remark}
\smallskip

%

\begin{figure}[t]
\centering 
\subfigure[]{\includegraphics[width=.49\columnwidth]{%
small_eta}} 
\hfill
\subfigure[]{\includegraphics[width=.48\columnwidth]{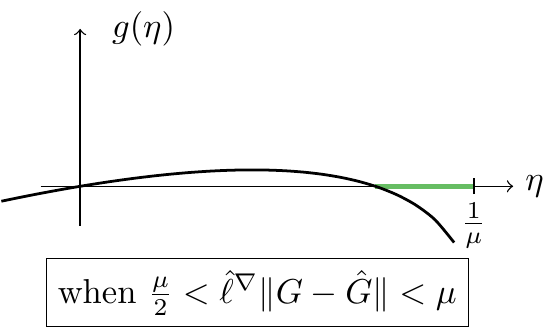}}\\
\caption{Roots of $g(\eta) = \sqrt{1 - \eta \mu} + \eta \hat \ell^\nabla \norm{G-\hat G}-1$. The green segment illustrates the set of
choices of $\eta$ that guarantee $\beta_1<1$.}
\label{fig:rootsBeta=1}
\end{figure}

Theorem~\ref{thm:trackingBoudn} provides a sufficient condition to 
guarantee that the controlled 
dynamics~\eqref{eq:gradientFlowController} converge to a stable 
optimizer $(\sps{x}{so}_k, \sps{u}{so}_k)$ (up to an asymptotic error 
that depends on the time-variability of the optimizer and on the 
sampling error). This result, combined with 
Proposition~\ref{prop:errorAtEquilibrium}, allows us to conclude convergence to a small neighborhood of the desired optimizer $(x_k^*, u_k^*)$.

According to Theorem~\ref{thm:trackingBoudn}, the rate of 
contraction of~\eqref{eq:gradientFlowController} depends
on various parameters of the optimization problem as well as of the 
dynamical system: it increases with the square root of $\eta$, 
$\mu$, and of the ratio $\underline{\lambda}(Q)/\bar \lambda(P)$ 
(that characterizes the rate of convergence of the open-loop plant 
\eqref{eq:plantModel}), and it is inversely proportional to
$\eta$, $\hat \ell^\nabla$,  $\norm{G - \hat G}$, and $\norm{C}$.
Moreover, there are three error terms that affect the bound: the 
error between the sample-based and true gradient 
$\norm{e_k}$, the shift in the stable input optimizer 
$\norm{\sps{u}{so}_{k+1}- \sps{u}{so}_k} $, 
and the worse-case shift in the stable state  optimizer 
$\sup_{t \in \integpos} \norm{\sps{x}{so}_{t+1}- \sps{x}{so}_t}$.

Some important comments on the choice of $\eta$ are in order. 
First, as discussed in Remark \ref{rem:feasibilityEta}, when 
the distributional shifts originated by the controller update are 
small (i.e., $\hat \ell^\nabla \norm{G -\hat G}<\mu/2$), then
a sufficiently-slow controller (i.e, $\eta\leq1/\mu$) guarantees 
contraction in \eqref{eq:trackingError}.
On the other hand, when the distributional shifts originated by the 
controller updates are large (i.e., 
$\hat \ell^\nabla \norm{G -\hat G}>\mu/2$), then there is a 
lower bound on the required controller gain to guarantee
contractivity 
(namely, $\eta>(2 \norm{G -\hat G} - \mu)/\norm{G -\hat G}^2$).
This fact can be interpreted by noting that a sufficiently-large 
controller gain guarantees that deviations introduced by shifts in 
the distribution (i.e. the term  
$\eta \hat \ell^\nabla \norm{G-\hat G}$) are dominated by the 
the algorithm contractivity towards the optimizer (i.e., the term 
$\sqrt{1 - \eta \mu}$).
See the proof of Theorem \ref{thm:trackingBoudn}, steps 2 and 4.
We note that this fact is in contrast with standard conditions for 
convergence of gradient-descent (see e.g. \cite{SB:14}), where 
arbitrarily-small choices of the controller gain always guarantee 
contractivity of the updates.

\begin{figure}[t]
\centering 
\subfigure[]{\includegraphics[width=.35\columnwidth]{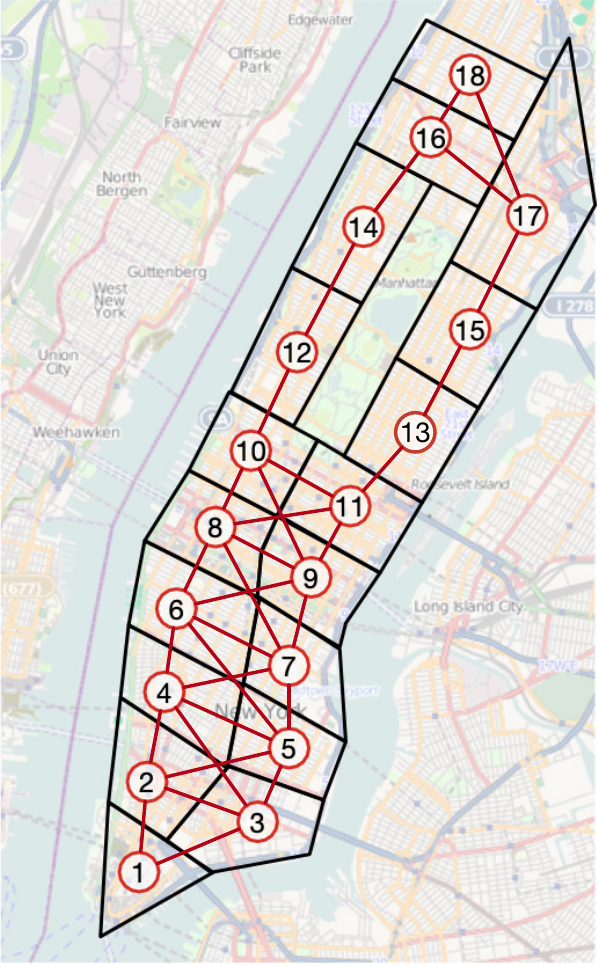}}
\hfill
\subfigure[]{\includegraphics[width=.6\columnwidth]{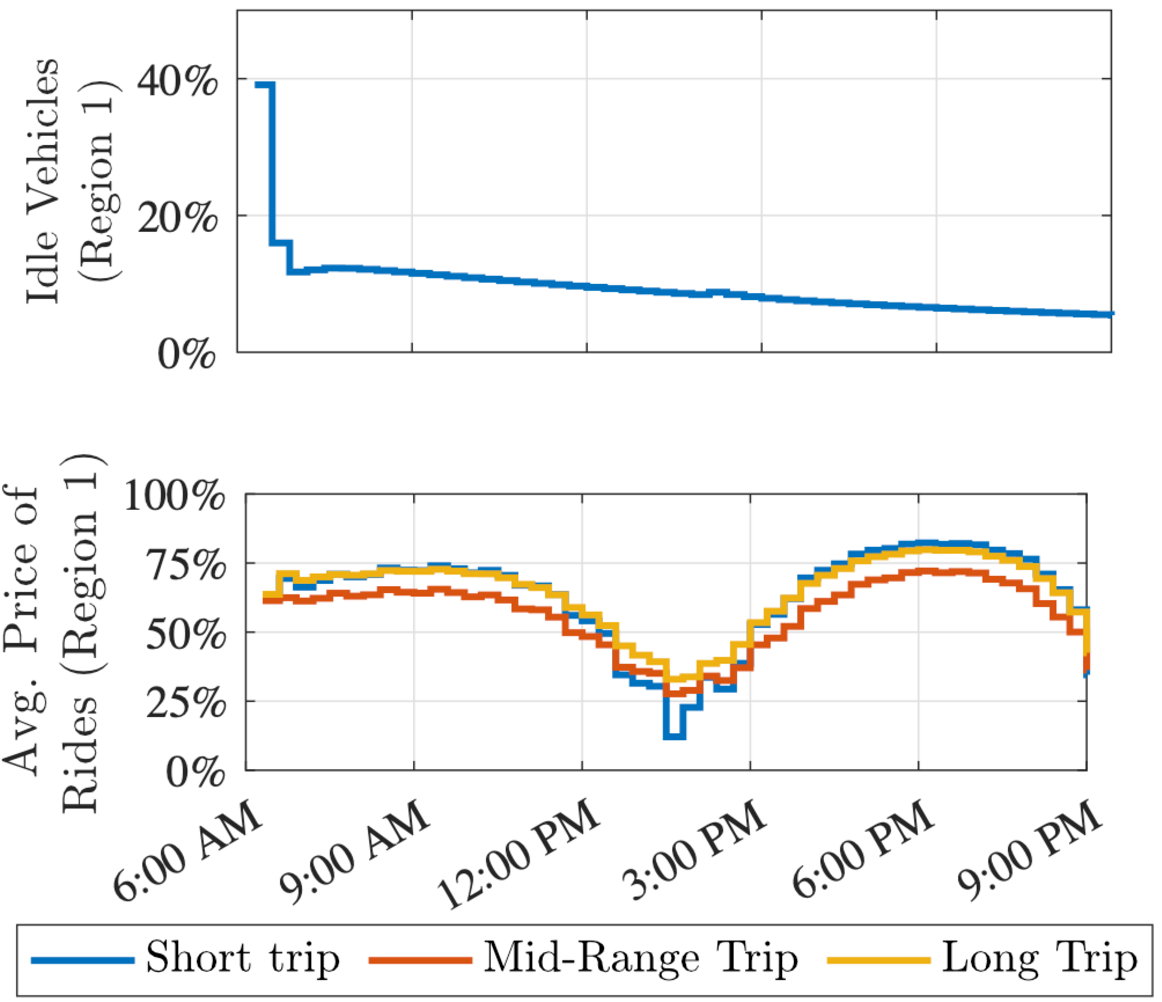}}
\caption{(a) Case study: Manhattan, NY, partitioned into $18$ regions 
of  ride requests. (b) Number of idle vehicles and cost of trips
for region 1. Short trip refers to trips from $1$ to $4$, mid range 
trip refers from $1$ to $10$, long trip refers from $1$ to $16$.
See caption of \figurename~\ref{fig:trafficOutput} for 
detailed experiment description.}
\label{fig:manhattanRegions}
\end{figure}

\section{Application to Ride-Service Scheduling}\label{sec:7}

We illustrate here the versatility and performance of the proposed controller synthesis approach in  an application scenario. A ride service provider (RSP), such as 
Uber, Lyft, or DiDi, seeks to maximize its profit by dispatching the 
vehicles in its fleet to serve ride requests from its 
customers.
We model the area of interest using a graph 
$\mc G = (\mc V, \mc E)$, where each node in $\mc V$ represents a 
region (e.g., a block or a district of a city) and an edge 
$(i,j)\in\mc E$ allows rides from node $i$ to node $j$. 
As a case study, we consider Manhattan, NY, and, similarly 
to~\cite{BT-MA:21}, we divide the area into $n=18$ region as 
in \figurename~\ref{fig:manhattanRegions}(a).
We assume that time is slotted and each slot has duration 
$\Delta = 5$ min.
We let $\delta^{ij}_k \in \realpos$ be 
the demand of rides from region $i \in \mc V$ to region 
$j\in \mc V$ at time $k \in \integpos$.
We denote by $p_{ij}^k$ the price of rides, decided 
by the RSP, from region $i$ to region $j$ at time $k$. 
We account for elasticity of the demand, whereby customers can decide 
to accept or decline rides after observing the price set by the RSP, 
and leave the system when prices are higher that the their maximum 
willingness to pay. We model the elasticity of the demand as follows:
\begin{align}
\label{eq:rideDemands}
d^{ij}_k = \delta^{ij}_k 
\Big(
1-\theta^{ij}\frac{p^{ij}_k}{\sbs{p}{max}^{ij}}
\Big).
\end{align}
Here, $d^{ij}_k$ denotes the accepted demand (after customers have 
observed the prices set by the RSP) of rides from region  $i$ to 
$j$ at time $k$, $\theta^{ij} \in [0,1]$ is a parameter that 
characterizes the steepness of elasticity, and 
$\sps{p}{max}_{ij} \in \real_{>0}$ is an upper limit on prices from
$i$ to $j$.

We let $x^i_k \in \realpos$ denote the idle-vehicle occupancy (i.e., 
the number of unoccupied vehicles, normalized by the fleet size) of 
fleet vehicles in region $i$ at time $k$.
We assume that drivers of unoccupied vehicles naturally rebalance the 
fleet, namely, they travel from regions with a high occupancy of 
(fleet) vehicles to regions with a lower occupancy in order to maximize
their profit. We denote by $a_{ij} \in \realpos$ the fraction of 
unoccupied vehicles that travel from $i$ to $j$ at every time step.
Travel times are non-negligible and can vary over time:  we model 
them by using Boolean variables:
\begin{align*}
\sigma^{ij,\tau}_{k}= 
\begin{cases}
1, & \text{if travel time from $i$ to $j$ at time $k$ is $\tau$ slots,}\\
0, & \text{otherwise},
\end{cases}
\end{align*}
defined for all $i,j \in \mc V$ and $k, \tau \in \integpos$.
In what follows, we assume that $\sigma^{ij,0}_{k}=0$ for all 
$i,j \in \mc V, k \in \integpos$, so vehicles take at least one time 
slot to travel between any pair of nodes; moreover, we let 
$\sigma^{ij,\tau}_{k}=0$ for all $\tau > T \in \integpos$, so that $T$  
is the maximum travel time in the network.
Accordingly, the occupancy of idle vehicles in each region $i$ 
satisfies the following conservation~law:
\begin{align}
\label{eq:plantApplication}
x_{k+1}^i &= x_k^i 
- \sum_{j \in \mc V} a_{ij} x_k^i 
+ \sum_{j \in \mc V} a_{ji} x_k^j\\
& \quad\quad\quad\quad\quad
- \sum_{j \in \mc V} d^{ij}_k
+ \underbrace{\sum_{j \in \mc V} \sum_{\tau=k-T}^{k-1} 
\sigma^{ji,k-\tau}_{\tau} d^{ji}_\tau + e_{k}^i}_{:= w^i_k}, \nonumber
\end{align}
In \eqref{eq:plantApplication}, the quantity 
$-\sum_{j \in \mc V} a_{ij} x_k^i$ accounts for the vehicles that 
leave the region due to fleet rebalancing, while 
$\sum_{j \in \mc V} a_{ji} x_k^j$ models rebalancing vehicles arriving 
at $i$.
The quantity $-\sum_{j \in \mc V} d^{ij}_k$ models all
customer-occupied vehicles departing $i$ at time $k$, while 
$\sum_{\tau=0}^{k} \sigma^{ji,k-\tau}_{\tau} d^{ji}_\tau$ accounts 
for occupied vehicles arriving to $i$ at time $k$.
Finally, we use the term $e_k^i$ to account for all the unmodeled 
disturbances affecting the dynamics, including inaccuracies in the 
rebalancing coefficients $a_{ij}$ and vehicles leaving or entering the 
system (e.g., drivers that stop or start driving).
Further, we assume that the travel times between regions (i.e., the 
scalars $\sigma^{ij,\tau}_{k}$) are unknown or difficult to 
estimate, and we incorporate all unknown terms in the exogenous signal 
$w_k^i$.

Because \eqref{eq:plantApplication} describes a mass conservation law, 
the dynamics \eqref{eq:plantApplication} define a compartmental model 
that is marginally stable \cite{WH-VC-EA:03}. For this reason, we define the state differences 
$\tilde x_k^i:= x_k^i - x_k^{i+1}$ for all $k \in \integpos$, 
$ i \in \mc V$. In these new variables, 
\eqref{eq:plantApplication} define a $(n-1)$-dimensional system that is
asymptotically stable and thus satisfies Assumption~\ref{ass:stabilityPlant}.

\begin{figure}[t]
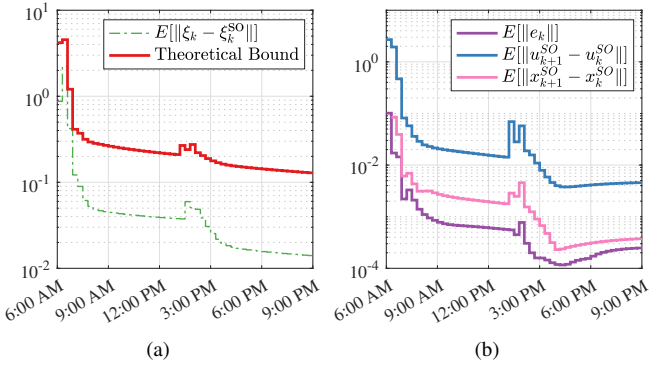

\centering \subfigure[]{\includegraphics[width=.48\columnwidth]{%
errorBound}}
\centering \subfigure[]{\includegraphics[width=.48\columnwidth]{%
errorBound_terms}}
\caption{(a) Numerical error and error bound from Theorem~\ref{thm:trackingBoudn} and (b) terms characterizing the error
bound. Curves illustrate the average over $100$ realizations of the 
noise terms. Fluctuation at around 1:00PM is caused by a drop in ride 
demands (see \figurename~\ref{fig:trafficOutput} (top panel)).}
\label{fig:TrackingError}
\end{figure}

We formulate the RSP's objectives of selecting the price of rides in 
order to  maximize its profit as the following optimization problem to
be solved at every $k$:
\begin{align}
\label{opt:applicationProblem}
\max_{p, x, d} ~~ 
& \sum_{i \in \mc V} \sum_{j \in \mc V} 
 p^{ij}  d^{ij} - c^{ij}  d^{ij} 
- \varrho \norm{ x}^2,\nonumber\\
\text{s.t.} ~~~~ 
& 0 = - \sum_{j \in \mc V} a_{ij}  x^i 
+ \sum_{j \in \mc V} a_{ji}  x^j
- \sum_{j \in \mc V}  d^{ij} + w_k^i, \nonumber\\
&  d^{ij} = \delta^{ij}_k 
\left( 1-\theta^{ij}  p^{ij}/\sbs{p}{max}^{ij} \right),
\nonumber\\
&  d^{ij} \geq 0, \;  x^{i} \geq 0, \quad \forall i,j, \in \mc V,
\end{align}
where $ p,  x,  d$ denote the vectors obtained by stacking 
$ p^{ij}$, $ x^i$, and $ d^{ij}$, for all $i, j \in \mc V$, 
respectively, $ p^{ij}  d^{ij}$ models the RSP earnings from 
serving the demand $ d^{ij}$, the quantity
$c^{ij} d^{ij}$, $c^{ij} \in \real_{>0}$, 
models the cost of routing vehicles from $i$ to $j$, and the term
$\varrho \norm{ x}$, $\varrho \in \real_{>0}$, describes the RSP's 
objective of maximizing the fleet utilization.

\begin{figure}[t]
\includegraphics[width=.95\columnwidth]{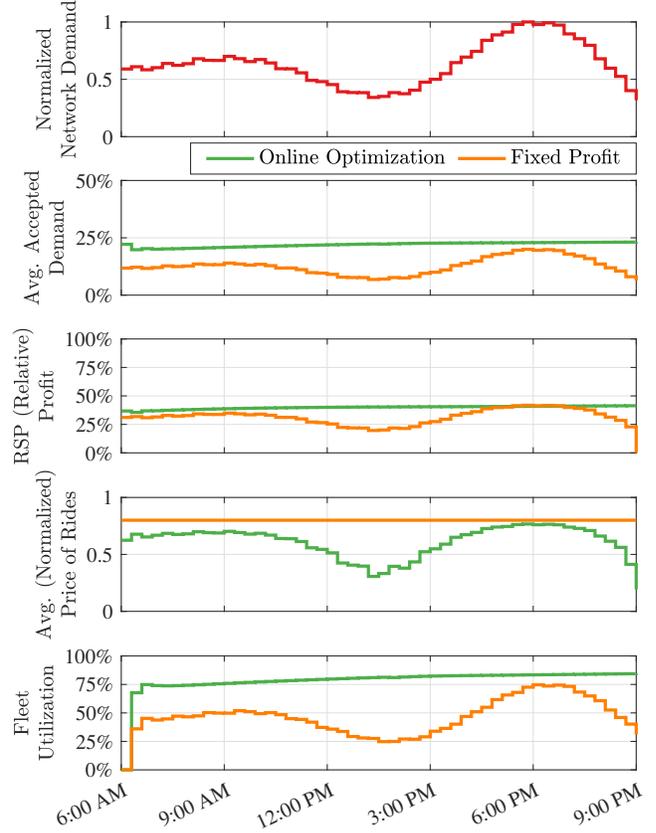}
\caption{Comparison between optimized pricing policy and fixed-pricing 
policy. In the fixed-pricing policy, the RSP sets prices to guarantee a 
25\% profit from the operational cost of the fleet. All lines show 
trajectories averaged over 100 realizations of the simulation and over 
the regions of the network (\figurename~\ref{fig:manhattanRegions}).
Network demand data was derived from \cite{TLC:21} for March 1, 2019, 
and normalized by its maximum value for illustration purposes. 
Fleet size was set to $25 \%$ of the maximum demand.
The RSP profit (third panel) was normalized by the maximum profit 
achievable if the fleet had an infinite number of vehicles.The average 
price of rides (fourth panel) was normalized by the maximum 
willingness to pay $\sps{p}{max}$.}
\label{fig:trafficOutput}
\end{figure}

To solve the optimization problems we employ the projected controller~\eqref{eq:projectedGradient} to account for constraints. 
All experiments were performed using \texttt{Matlab 2019a}, ride 
demands and locations were estimated by using the Taxi and Limousine 
Commission (TLC) data from New York City \cite{TLC:21} for March 1, 
2019 between 6:00AM and 9:00PM. Note that the available demand data 
does not describe the potential rides, but rather the realized ones. 
Although this data may not reflect the true demand, it is often used as 
a good approximation in several related works (see e.g.,~\cite{BT-MA:21}). 

\figurename~\ref{fig:TrackingError}(a) illustrates the numerical 
tracking error and the error bound characterized in Theorem 
\ref{thm:trackingBoudn}, and \figurename~\ref{fig:TrackingError}(b)
presents a breakdown of the terms characterizing the tracking error. 
The plots show that during the initial transient the tracking 
error quickly decreases, up to a steady-state value of order 
$10^{-2}$, thus validating the conclusions drawn in 
Theorem~\ref{thm:trackingBoudn}. 
\figurename~\ref{fig:TrackingError}(b) showcases that the tracking 
error does not further decrease beyond such steady-state error because 
the optimizer $(\sps{u}{so}_k, \sps{x}{so}_k)$ is changing over time.
The sudden increase in error that occurs at around 1:00PM can 
be interpreted by means of Fig 3(b), which shows that the price of 
rides, at this time of the day, must decrease consistently since the 
network experiences a drop in ride demands (see 
\figurename~\ref{fig:trafficOutput}, top panel).

In \figurename~\ref{fig:trafficOutput}, we compare the performance of 
the online optimization method with a fixed-pricing policy, whereby the 
RSP selects a fixed price for all rides, corresponding to a 25\% 
profit from the operational cost of the fleet.
A 25\% profit was selected as the maximum profit that allows the RSP to 
serve the peak of demand with the available fleet.
\figurename~\ref{fig:trafficOutput}, second panel from the top, shows 
that by using the adaptive pricing policy the RSP always accept a higher
number of rides; \figurename~\ref{fig:trafficOutput}, third panel, shows that 
the RSP profit is always higher under the adaptive pricing policy 
except at the peak of demand, which can be interpreted as an optimistic 
situation where suboptimal pricing still leads to a high utilization of 
the fleet; \figurename~\ref{fig:trafficOutput}, fourth panel shows
that the adaptive policy adjusts the price of rides based on the 
instantaneous demand,  and showcases that our optimal pricing policy 
tends to reduce the price of rides in the interest of maximizing 
fleet utilization; finally, \figurename~\ref{fig:trafficOutput}, bottom 
panel, shows that the optimized pricing policy always results in a 
higher fleet utilization.

\section{Conclusions}\label{sec:8}
We have proposed a data-driven method to design controllers that steer an unknown dynamical system to the solution trajectory of a stochastic, time-varying optimization problem. The technique does not rely on any prior knowledge or estimation of the
system matrices or the exogenous disturbances affecting the model 
equation. We have shown how knowledge of (possibly noisy) input-output data generated by the open-loop system can be used to compute the 
steady-state transfer function of the system and how to approximate it when disturbances are unknown. Our analysis has established that the resulting closed-loop dynamics are strictly 
contractive when the controller gain is chosen: (i) small enough so 
that the controller is sufficiently slower than the dynamical system and, 
simultaneously, (ii) large enough so that the controller can overcome 
shifts in the distributions associated with the lack of knowledge of 
the system dynamics.
Our work here demonstrates for the first time that online 
optimization techniques can be used to control dynamical systems even 
when the system model is unknown.
This opens up several exciting opportunities for future work, including 
extensions to scenarios where the control method guarantees persistence 
of excitation, and the generalization to scenarios with distributed 
computation, sensing, and communication.

\bibliographystyle{IEEEtran}
\bibliography{bib/alias,bib/bibliography_journal,bib/Main,bib/GB,bib/Main_GB} 

\end{document}